\documentclass{article}

% Language setting
% Replace `english' with e.g. `spanish' to change the document language
\usepackage[english]{babel}

% Set page size and margins
% Replace `letterpaper' with `a4paper' for UK/EU standard size
\usepackage[letterpaper,top=2cm,bottom=2cm,left=3cm,right=3cm,marginparwidth=1.75cm]{geometry}

% Useful packages
\usepackage{amsmath}
\usepackage{amssymb}
\usepackage{amsthm}
\usepackage{graphicx}
\usepackage[dvipsnames]{xcolor}
\usepackage[colorlinks=true, allcolors=blue]{hyperref}

\newcommand{\bb}[1]{\mathbb{#1}}
\newcommand{\Op}{\text{Op}}
\newcommand{\Trace}{\operatorname{Trace}}

\newcommand{\Vol}{\text{Vol}}
\newcommand{\dist}{\text{dist}}

\newtheorem{theorem}{Theorem}

\newtheorem{lemma}[theorem]{Lemma}
\newtheorem{corollary}[theorem]{Corollary}
\theoremstyle{remark}\newtheorem*{remark}{Remark}
\theoremstyle{definition}\newtheorem*{definition}{Definition}

\def\ang#1{{\langle #1 \rangle }}
\def\RR{\mathbb{R}}
\def\CC{\mathbb{C}}
\def\HH{\mathbb{H}}

\def\NN{\mathbb{N}}

\def\LL{\mathcal{L}}

\definecolor{olivesample}{rgb}{0.0,0.5,0.0}

\newcommand\PSL{\mathrm{PSL}(2, \RR)}
\newcommand\fT{\frak{T}}
\renewcommand\H{\mathcal{H}}

          % co-author comments

\title{Geodesic flow and decay of traces on hyperbolic surfaces}
\author{Antoine Gansemer and Andrew Hassell}

\begin{document}
\maketitle

\begin{abstract}
We study pseudodifferential operators on a hyperbolic surface using `Zelditch quantization' \cite{zel.1}. We motivate and study the trace of $A_2^* A_1(t)$, where $A_2$ is a fixed operator and the Zelditch symbol of $A_1(t)$ evolves by geodesic flow. We find conditions under which the trace decays exponentially as $t \to \pm \infty$. 
%We explore a relationship between two kinds of correlations on hyperbolic space. One is a classical correlation and the other is a quantum correlation.  We explore if knowledge of one of the correlations can give us information about the other correlation. We also prove a ``decay of semi-quantum correlations" result.
\end{abstract}

\section{Introduction: Background and motivation}
 In this paper we consider traces of time-varying families of pseudodifferential operators defined on a compact hyperbolic surface. Our interest in this topic arises from the \emph{exact} intertwining, discovered by Anantharaman and Zelditch \cite{ana.zel.2},  between the classical flow on symbols of such operators (geodesic flow acting on symbols via pullback) and the quantum flow (conjugation by the Schr\"odinger group $e^{it\Delta}$ acting on the associated operators). Of course an exact intertwining only makes sense if there is an exact correspondence between symbols and operators. On  hyperbolic space this is provided by the Zelditch calculus \cite{zel.1}, which has the key property of left-invariance, i.e.\ if a symbol is invariant under the left action of a discrete group $\Gamma$, then the associated operator is also $\Gamma$-invariant. Thus the Zelditch quantization descends to compact hyperbolic surfaces. 

Our main result in this note actually does not involve the Anantharaman-Zelditch intertwining operator $\LL$ at all. Nonetheless, the intertwining operator provides the motivation for posing the question that we answer here, and we expect that the result presented here will be an ingredient of a larger program in which the intertwining operator plays a leading role. Therefore we explain some of this background and motivation before describing the particular result we present here. 

In the study of eigenvalues and eigenfunctions related to elliptic operators on compact manifolds, it is natural to consider frequency intervals of approximately unit length, independent of frequency; that is, we look at a frequency interval $[\lambda, \lambda + 1]$. For example, H\"ormander in his seminal article \cite{hor.spectral.fn} obtained a $O(\lambda^{n-1})$ kernel bound on the spectral projection $E_{[\lambda, \lambda + 1]}(\sqrt{\Delta})$ where $\Delta$ is the Laplacian on a compact manifold without boundary\footnote{Our sign convention is that $\Delta$ is a positive operator}. This implies the optimal $O(\lambda^{n-1})$ bound on the remainder term in the Weyl asymptotic formula for the eigenvalue counting function $N(\lambda)$, the rank of the spectral projection $E_{[0, \lambda]}(\sqrt{\Delta})$, and also implies an $O(\lambda^{(n-1)/2})$ bound on the $L^\infty$ norm of eigenfunctions with eigenvalue $\lambda^2$ (or indeed, on the $L^\infty$ norm of any spectral cluster with support in $[\lambda, \lambda + 1]$). This analysis is achieved by passing to an approximate spectral projection where the indicator function $1_{[\lambda, \lambda + 1]}$ is smoothed to a Schwartz function $\rho(\cdot - \lambda)$, such that the Fourier transform $\hat \rho(t)$ has compact support in $t$, and then representing $\rho(\sqrt{\Delta} - \lambda)$ in terms of the half-wave group $e^{it \sqrt{\Delta}}$:
\begin{equation*}
\rho(\sqrt{\Delta} - \lambda) = \frac1{2\pi} \int e^{it \sqrt{\Delta}} \hat \rho(t) \, dt.
\end{equation*}
One then analyzes the half-wave group for $t$ in the support of $\hat \rho$, which can be taken to be a small interval containing zero. The key then is to analyze the singularity of the (distributional) trace of the half-wave kernel as $t \to 0$. 

To do the analysis at smaller frequency scales, that is, on smaller than unit-size intervals as $\lambda \to \infty$, we scale $\rho$ so that it concentrates near 0 as $\lambda \to \infty$; dually, $\hat \rho$ will scale so as to have larger support. We then hit new singularities of the wave trace, which occur at the length spectrum of $\sqrt{\Delta}$; that is, at the lengths of periodic bicharacteristics of the operator (geodesics, in the case of the Laplacian). To obtain an improvement on the remainder term in the Weyl asymptotic formula to $o(\lambda^{n-1})$, or the corresponding $o(\lambda^{(n-1)/2})$ improvement in the $L^\infty$ bound on eigenfunctions, one studies the wave group for arbitrarily long times, and (necessarily, in view of the example of the round sphere) requires a condition  on the dynamics of bicharacteristic flow, for example, that the set of periodic bicharacteristics is measure zero in the characteristic variety \cite{duist.guille, ivrii}. 

To obtain a quantitative improvement on the Weyl remainder, say  to $O(\lambda^{n-1}/\log \lambda)$, one needs to narrow the spectral window to $[\lambda, \lambda + A/\log \lambda]$. Then, taking account of the  dual scaling between the function $\rho$ and its Fourier transform $\hat \rho$, this requires looking at the wave trace for a time scale that is logarithmic in the frequency. This can be done in the case of the Laplacian on manifolds with negative curvature (or, in the case of two dimensions, manifolds without conjugate points) \cite{Berard}. See also \cite{hassell.tacy.1} and a series of works by Sogge and co-authors, for example \cite{sogge.17}, \cite{blair.sogge} for further results on spectral clusters associated to logarithmically narrowed spectral windows. 

To go beyond this point, i.e.\ to consider even narrower spectral windows, seems very difficult in an arbitrary geometric situation due to the restriction in Egorov's Theorem. Egorov relates the `quantum flow', that is, the half-wave group and the `classical flow', that is, the geodesic flow on the cosphere bundle, but in an arbitrary geometry it is typically only valid up to a time proportional to the logarithm of the frequency, a time scale which is already saturated in the articles cited in the previous paragraph. However, one might still expect that it is possible to go to finer scales in frequency in very special geometries, such as constant negative curvature. The first interesting case is that of compact hyperbolic surfaces.

This, then is the (potential) significance of the Anantharaman-Zelditch intertwining operator: as an exact intertwining between the classical geodesic flow and the Schr\"odinger group (which in spectral analysis can play the same role as the half-wave group discussed above), it is in particular valid for all times and thereby avoids the `Egorov restriction' to times smaller than the logarithm of the frequency. Thus it provides a potential pathway to considering much smaller spectral windows and, hence, much more precise Weyl remainder terms, more precise $L^\infty$ bounds on eigenfunctions, more precise small-scale quantum ergodicity, and so on.

In the study of small-scale quantum ergodicity, say on compact hyperbolic surfaces to connect with the work of Anantharaman-Zelditch,  one is led to the study of time-varying families of traces of the form 
\begin{equation}
\Trace A_h e^{iht \Delta/2} e^{-h^2 \Delta} A_h e^{-ith\Delta/2} e^{-h^2 \Delta},
\label{eq:qc}\end{equation} %\ah{I put a factor of two into the Schrodinger group to conform to AZ.}
where $h$ is a semiclassical parameter, representing inverse frequency, and $A_h$ is a multiplication operator by a function with support in a ball around a fixed point with a radius tending to zero as $h \to 0$. Here the factors of $e^{-h^2 \Delta}$ are a soft frequency cutoff to ensure that the composition is trace class. See the works of Han \cite{han}, and Hezari-Rivi\`{e}re \cite {hez.riv.1}, for more on small scale quantum ergodicity at logarithmic length scales on negatively curved manifolds, the work of Zelditch \cite{zel.2} and Schubert \cite{schu} are also closely related.
We use the Schr\"odinger propagator rather than the half-wave group to conform with the analysis in Anantharaman-Zelditch. (The bicharacteristic flow of the Schr\"odinger propagator is regular at zero frequency, which is not the case for the half-wave propagator. This was important for Anantharaman and Zelditch as they were interested in exact formulae, for which one cannot simply cut off at low frequencies.) More generally, we consider traces of the form 
\begin{equation}
 h^2 \Trace B_h e^{iht \Delta/2} A_h e^{-ith\Delta/2} ,
\label{eq:traceBA}\end{equation}
where $A_h$ and $B_h$ are pseudodifferential operators of differential order $-\infty$ and semiclassical order $0$ on a compact hyperbolic surface. The factor of $h^2$ in front is to cancel the growth rate of the trace of an operator of semiclassical order zero as $h \to 0$, which is $\sim h^{-2}$ in two dimensions. 
One can ask about the asymptotic property of such a trace for large time $t$. In particular, it would be useful to understand under what conditions \eqref{eq:traceBA} tends to zero as $t \to \infty$, and if so at what rate. 

We now consider the following \emph{purely formal} calculation. We write the trace in \eqref{eq:traceBA} as a bilinear functional acting on the Zelditch symbols $a$ and $b$, which we denote $\ang{\cdot, \cdot}$. Notice that on the whole of hyperbolic space, the trace would be the integral of $ab$ over the cotangent bundle, but this is not the case on a hyperbolic surface. Instead it takes the form 
\begin{equation}\label{eq:traceform}
\Trace B^* A = \int a(g, r) \overline{\fT b (g, r)} \, dg \, dr
\end{equation}
where $\fT$ is a linear operator mapping symbols to distributions, such that $ \fT b$ has support on the countable number of energy shells of radius $\lambda_j$, where the spectrum of $\Delta_X$ is $\{ 1/4 + \lambda_j^2 \}$. This identity is derived in Lemma~\ref{lemma.trace.formula}.

We use the notation from \cite{ana.zel.2}: $V^t$ is the action induced on the Zelditch symbol by conjugation with the Schrodinger propagator $e^{it \Delta}$ at time $t$. Thus the symbol of the operator $e^{iht \Delta/2} A e^{-ith\Delta/2}$ from \eqref{eq:qc}  is $V^{th} a$. So we find that 
\begin{equation}
\Trace B^* e^{ith \Delta/2} A e^{-ith\Delta/2} =  \int (V^{th} a)(g, r) \overline{(\fT b)(g, r)} \, dg \, dr .
\end{equation}
Let $\LL$ denote the Anantharaman-Zelditch intertwining operator. In fact, in unpublished work, the first author has found that  a slight adjustment of the definition of the intertwining operator in \cite{ana.zel.2} leads to  $\LL$ being an isometry on $L^2$. Under some regularity assumption on $\LL$, one would have (and this is where the argument becomes formal)
\begin{equation}
\Trace B^* e^{ith \Delta/2} A e^{-ith\Delta/2} =  \int (\LL V^{th} a)(g, r) \overline{\LL \big( \fT b (g, r) \big)} \, dg \, dr .
\end{equation}
We now apply the intertwining property of $\LL$ (see \cite[Section 1.4]{ana.zel.2}), namely
\begin{equation}
\LL V^t  = G^t \LL,
\label{eq:intertwining}\end{equation}
where $G_t$ is the geodesic flow scaled by the speed factor $r$ on the energy shell of radius $r$ (this is the bicharacteristic flow for the operator $e^{it\Delta/2}$). Thus we obtain 
\begin{equation}\label{eq:classicalflow}
\Trace B^* e^{iht \Delta/2} A e^{-ith\Delta/2} = \int (G^{th} \LL  a)(g, r) \overline{\LL \big( \fT b (g, r) \big)} \, dg \, dr .
\end{equation}
Now comparing this identity with \eqref{eq:traceform}, we see that this is analogous to studying the trace of $B^* A(t)$ where $B$ is held fixed and the symbol of $A$ evolves according to geodesic flow. This is exactly the question we study here: when $B$ is held fixed and the symbol of $A$ evolves according to geodesic flow, under what additional conditions can we deduce that the trace of $B^* A(t)$ decays, and what is the rate of decay? Our main result, Theorem~\ref{theorem.main}, is an answer to this question. 
%The RHS is of a form where we can hope to apply decay of correlations, leading to exponential decay of the trace. Of course, this would require that the two factors, $\LL a$ and $\LL Tb$ are in appropriate function spaces. 
%
%Hence this program requires two main steps:
%
%\begin{itemize}
%
%\item We need to show that 
%find appropriate function spaces $\XX$, $\YY$ such that, for a suitable interesting class of symbols $a$ and $b$, the intertwining operator $\LL$ maps into these function spaces. The function spaces must be such that decay of correlations holds for one function in $\XX$ and one in $\YY$. 
%
%\item We need to understand the trace of $B^* A(t)$ where $B$ is held fixed, and the symbol of $A(t)$ evolves by the classical (geodesic) flow. We need to show decay of the trace for suitable choices of $a$ and $b$. 
%\end{itemize}
%
%In this article we achieve the second step. The first is under active investigation by the authors. 

\section{Preliminaries and notation}
\subsection{Hyperbolic space, metric, volume form, dynamics}
Let us fix the upper half plane, $\bb{H}^2$ as the set $\{z=x+iy: x\in\bb{R}, y>0\}$ and equip it with the standard smooth (holomorphic) structure considered as an open subset of $\bb{C}$. Further to this, consider the standard hyperbolic metric on $\bb{H}^2$, given by
\[
ds^2=\frac{dx^2+dy^2}{y^2}.
\]
This metric induces the following volume form and (positive) Laplace-Beltrami operator respectively:
\[
d\Vol = \frac{dxdy}{y^2}, \qquad \Delta_{\bb{H}^2}=y^2\left( D_x^2 + D_y^2 \right), \quad D_x = \frac1{i} \frac{\partial}{\partial x}, \quad D_y = \frac1{i} \frac{\partial}{\partial y}.
\]
We can map $\bb{H}^2$ bijectively into the unit disk $\bb{D}=\{z=x+iy: |z|<1\}$ using the Cayley transform,
\begin{equation}\label{cayley.transform}
z \mapsto \frac{z-i}{z+i}.
\end{equation}
We define this map to be an smooth isometry between $\bb{H}^2$ and $\bb{D}$, inducing the hyperbolic metric on the unit disk 
\begin{equation}\label{hyperbolic.disk.metric}
ds^2=4\frac{dx^2+dy^2}{(1-x^2-y^2)^2}.
\end{equation}
%This also places a smooth structure on $\bb{D}$, which is equivalent to the standard smooth structure on $\bb{D}$ considered as an open subset of $\bb{C}$. 
The map extends smoothly to the boundary of $\bb{H}^2$, the set $\{x+iy:y=0\}$, which is mapped smoothly to  $\partial \bb{D} \setminus \{ 1 \}$. We  introduce a point at infinity in the upper half-plane model which maps to $1\in\partial\bb{D}$ to make this a bijection. 

Taking a look now at the isometry group of $\bb{H}^2$, it is well-known that the fractional linear action of $\PSL$ on $\bb{H}^2$
\[
\begin{pmatrix}
    a & b\\
    c& d
\end{pmatrix}: z \mapsto \frac{az+b}{cz+d},
\]
is an isometry of $\bb{H}^2$.  The isometry group of the disk model is $\text{PSU}(1,1)$ which is conjugate to $\PSL$ by the Cayley transform. We write $G$ for $\PSL$ or $\text{PSU}(1,1)$, depending on the model of hyperbolic space we are using (and we will often move between the two models freely), and let $g$ or $\gamma$ denote an element of $G$. It is again easy to check that $\PSL$ acts transitively and faithfully on $S\bb{H}^2$, hence we can identify $S\bb{H}^2$ with $G=\PSL$ with the following identification
\[
\PSL\ni g \mapsto (g(i), g'(i)i)\in S\bb{H}^2.
\]
In particular this identifies the identity element of $G$ with the unit vector $(i,i)$ based at $i \in \bb{H}^2$ pointing along the imaginary axis. With this identification of  $G$ with $S\bb{H}^2$, the geodesic flow starting at a point $h\in G$ after time $t$ is denoted by $g^t(h)$ and is given by the right action, $g^t(h)=h\mathbf{a}_t$, where $\mathbf{a}_t$ is the element of $G$ given by 
\[
\mathbf{a}_t:= \begin{pmatrix}
e^{t/2} & 0 \\
0 & e^{-t/2}
\end{pmatrix}.
\]
The geodesics on $\bb{H}^2$ or $\bb{D}$ trace out circular arcs orthogonal to the boundary at conformal infinity. This fact is well known in hyperbolic geometry.  See, for example, \cite{helg.1} for a proof of this fact.

We define the stable horocyclic flow and the unstable horocyclic flow on $G$ by the right action with the elements $\mathbf{n}_u$ and $\overline{\mathbf{n}_v}$ respectively, where
\[
\mathbf{n}_u:=\begin{pmatrix}
1 & u\\
0 & 1
\end{pmatrix},
\qquad
\overline{\mathbf{n}_v}:=\begin{pmatrix}
1 & 0\\
v & 1
\end{pmatrix}.
\]  

We also define the element $\mathbf{k}_\theta$, given by 
\begin{equation}
\mathbf{k}_\theta = \begin{pmatrix}
\cos \frac{\theta}{2} & \sin \frac{\theta}{2}  \\
-\sin \frac{\theta}{2}  & \cos \frac{\theta}{2}
\end{pmatrix}, \qquad \theta\in [0,2\pi),
\end{equation}
whose right action fixes the base point $z$ and rotates by angle $\theta$ in the fibre of $S \bb{H}^2$. Notice that $\mathbf{k}_{2\pi} = \mathbf{k}_0$ in $\PSL$.  We can define global coordinates $(z, \theta)$ on $G \simeq S \bb{D}$ by taking $(z, \theta = 0)$ to be the unit vector such that the geodesic emanating from this vector tends to a fixed point, say $1$, on $\partial \bb{D}$, and so that the action of $\mathbf{k}_\theta$ acts by $(z, \theta') \mapsto (z, \theta' + \theta)$. 

The group $G$ has a bi-invariant Haar measure, which we denote $dg$, given by $d\Vol(z) d\theta$ in $(z, \theta)$ coordinates. 

The elements $\mathbf{a}_t$ form a subgroup of $G$ denoted $A$. Similarly the elements $\mathbf{n}_u$, $\overline{\mathbf{n}_v}$ and $\mathbf{k}_\theta$ form subgroups, denoted $N$, $\overline{N}$ and $K$ respectively. We define the vector fields $H$, $X_{+}$, $X_{-}$ on $G\simeq S\bb{H}^2$ as the infinitesimal generators of the subgroups $A$, $N$ and $\overline{N}$ respectively; that is, they generate the geodesic flow, the stable horocyclic flow and the unstable horocyclic flow respectively.  Let us note that $H$, $X_{+}$ and $X_{-}$ are left-invariant vector fields on $G$ because the flows they generate act as right actions on $G$, and hence commute with the left action of $G$ on itself.  The Lie brackets of these vector fields satisfy:
\begin{equation}
[H,X_{+}]=X_{+}, \quad [H, X_{-}] =-X_{-}, \quad [X_{+},X_{-}]=2H.
\end{equation}
which are easily derived from the matrix expressions for $\mathbf{a}_t, \mathbf{n}_u,$ and $ \overline{\mathbf{n}_v}$.

We can visualise these three flows on $G$ using the identification $G\simeq S\bb{D}$. Please see Figure \ref{fig.flows}.

Let us also introduce coordinates $(z,b)\in\bb{D}\times\bb{S}^1$. Given $(z,v)\in S\bb{D}$, there is a unique $b\in\bb{S}^1$ such that the geodesic flow applied to $(z,v)$, $g^t(z,v)$ has forward (conformal) endpoint $b\in\bb{S}^1$ as $t\rightarrow\infty$. In this way the fibre $S_z\bb{D}$ and $\bb{S}^1$ are diffeomorphic (in a $z$-dependent way). Hence we have identifications $G\simeq S\bb{D}\simeq \bb{D}\times\bb{S}^1$.

%The Haar measure on $G$, which is both left- and right-invariant, we denote by $dg$. It is given by 
%\begin{equation}
%dg = d\Vol(z) d\theta = 

Fix a point $z\in\bb{D}$ and $b\in\bb{S}^1$. We define the \emph{horocycle} through $z$ and tangent to $b$ as the limit of a family of curves through $z$. Explicitly, consider the curve 
\[
C(z,\beta):=\{z'\in\bb{D}: \dist_{\bb{D}}(z',\beta)=\dist_{\bb{D}}(z,\beta)\}
\]
where $\dist_{\bb{D}}(z',\beta)$ refers to the distance on $\bb{D}$ between points $z'$ and $\beta$ induced by the hyperbolic metric, (\ref{hyperbolic.disk.metric}).  These are circles in the hyperbolic disk with centre point $\beta$ passing through $z$. As $\beta$ tends towards the point $b\in\bb{S}^1$ at infinity, say along a geodesic, the curves $C(z, \beta)$ converge smoothly to a curve that we call the horocycle through $z$ tangent to $b$.
%, as the limit of the locus $C(z,\beta)$ as $\beta\rightarrow b$ (the limit as $\beta\rightarrow b$ is taken in the Euclidean metric on $\bb{D}$, conformal to the hyperbolic metric). 
In the disk model, a horocycle is a Euclidean circle passing through $z$ and tangent to the boundary, $\bb{S}^1$ of the disk at $b$.  See Figure \ref{fig.flows}.  For comparison, the corresponding sets in Euclidean space $\RR^n$ relative to a point $\omega$ on the sphere at infinity are hyperplanes with normal vector $\omega$.

\begin{figure}[h]
\caption{By identifying $g$ with a point in $S\bb{D}$, the \textcolor{olivesample}{geodesic flow}, \textcolor{blue}{stable horocyclic flow}, \textcolor{red}{unstable horocyclic flow} and \textcolor{orange}{rotation in the fibres} are represented by the right action on $g$ by $\mathbf{a}_t$, $\mathbf{n}_u$, $\overline{\mathbf{n}_v}$ and $\mathbf{k}_{\theta}$ respectively. The images of the two horocyclic flows trace out horocycles in the base point $z$, which (in the disk model) are circles tangent to the conformal boundary at the forward and backward endpoints ($b$ and $b'$ respectively) of the geodesic determined by $g$.}\label{fig.flows}.
\includegraphics[scale=1.3]{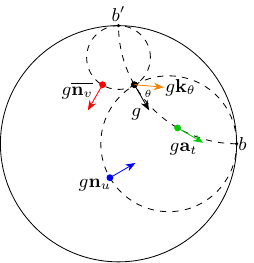}
\centering
\end{figure}

%\begin{lemma}[KAN decomposition]
%Every $g\in G$ can be written uniquely as the matrix product of an element $k_{\theta}\in PSO(2)$, an $\mathbf{a}_t$ and an $\mathbf{n}_u$ (as defined above). Note that the family, $\mathbf{a}_t$, over $t\in\bb{R}$ and the family, $\mathbf{n}_u$, over $u\in\bb{R}$ form subgroups of $G$, denoted $A$ and $N$ respectively.
%\end{lemma}
%\ag{To complete: Prove this lemma}
%\ah{We should define $dg$ to be the Haar measure and notice that $dg=dvol(z)d\theta=e^{\langle z, b\rangle}dvol(z)db$}

Let us now state the definition of an Anosov flow and present a short direct proof that the geodesic flow on $S\bb{H}^2$ is Anosov.
\begin{definition}[Anosov flow]
A smooth flow $\varphi_t$ on a smooth manifold $M$ is called \emph{uniformly hyperbolic} or \emph{Anosov} if, for all $x\in M$, there exist a splitting $T_xM=E_0(x)\oplus E_u(x)\oplus E_s(x)$ which is preserved by the flow, i.e. $E_i(\varphi_t(x))=d\varphi_t(x)(E_i(x))$ for all $x\in M$, $t\in\bb{R}$ and $i = 0, u, s$, where $E_0(x)$ is spanned by the vector field which generates $\varphi_t$ at $x$, and such that, for any fixed norm on the fibres $T_xM$, there exists constants $C>0$, $\lambda>0$ such that
\begin{equation}
    \begin{split}
        |d\varphi_t(x)(v)|\leq Ce^{-\lambda t}|v|, \quad \text{if} \quad v\in E_s(x), t>0\\
        |d\varphi_t(x)(v)|\leq Ce^{-\lambda t}|v|, \quad \text{if} \quad v\in E_u(x), t<0
    \end{split}
\end{equation}
Consequently, $E_s(x)$ is called the stable subspace of $T_xM$ and $E_u(x)$ is called the unstable subspace of $T_xM$. 
\end{definition}

\begin{lemma}[geodesic flow on $S\bb{H}^2$ is an Anosov flow]\label{lemma.geodesic.is.anosov}
The geodesic flow on $S\bb{H}^2$ is an Anosov flow with stable and unstable leaves given by the stable and unstable horocycles respectively.
\end{lemma}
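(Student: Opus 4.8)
The plan is to exhibit the Anosov splitting explicitly using the left-invariant framing $\{H, X_+, X_-\}$ of $TG\simeq T(S\bb{H}^2)$ and to compute the differential of the geodesic flow by an elementary adjoint-action computation. Since the geometry is homogeneous, I would fix once and for all the left-invariant Riemannian metric on $G$ for which $H$, $X_+$, $X_-$ are orthonormal at every point; all the estimates below are then uniform in the base point, which is the appropriate substitute here for compactness. At $x\in G$, set $E_0(x)=\RR\, H(x)$, $E_s(x)=\RR\, X_+(x)$, $E_u(x)=\RR\, X_-(x)$; by the definition of $X_\pm$ as the generators of the stable and unstable horocyclic flows, these are the tangent lines at $x$ to the geodesic, the stable horocycle, and the unstable horocycle through $x$, respectively, and they span $T_xG$ since the three vector fields are pointwise independent.

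Next I would differentiate the geodesic flow $g^t(x)=x\mathbf{a}_t$, which is right translation by $\mathbf{a}_t$. For any left-invariant vector field $V$ with $V(e)=\xi$, the commutation of left and right translations gives
\[
dg^t\big(V(x)\big) \;=\; \frac{d}{ds}\Big|_{s=0} x\exp(s\xi)\,\mathbf{a}_t \;=\; \frac{d}{ds}\Big|_{s=0} (x\mathbf{a}_t)\exp\!\big(s\,\mathrm{Ad}_{\mathbf{a}_t^{-1}}\xi\big),
\]
so $dg^t$ sends the left-invariant field generated by $\xi$ to the one generated by $\mathrm{Ad}_{\mathbf{a}_t^{-1}}\xi$, evaluated at $g^t(x)$. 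Reading the adjoint action off the matrix form of $\mathbf{a}_t$ (equivalently, from $[H,X_\pm]=\pm X_\pm$ and $\mathrm{Ad}_{\mathbf{a}_t}=e^{t\,\mathrm{ad}_H}$), one gets $\mathrm{Ad}_{\mathbf{a}_t^{-1}}H=H$, $\mathrm{Ad}_{\mathbf{a}_t^{-1}}X_+=e^{-t}X_+$, and $\mathrm{Ad}_{\mathbf{a}_t^{-1}}X_-=e^{t}X_-$.

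Consequently $dg^t$ preserves the splitting, fixes $E_0$, multiplies vectors in $E_s(x)$ by $e^{-t}$ and vectors in $E_u(x)$ by $e^{t}$; with respect to the chosen left-invariant metric, $|dg^t(v)|=e^{-t}|v|$ for $v\in E_s(x)$ and $|dg^t(v)|=e^{-|t|}|v|$ for $v\in E_u(x)$, $t<0$, so the Anosov estimates hold with $C=\lambda=1$. Finally, the integral curves of the one-dimensional distributions $E_s$ and $E_u$ are precisely the orbits of the flows generated by $X_+$ and $X_-$ — the stable and unstable horocyclic flows — which project to the horocycles described above; this identifies the stable and unstable leaves with the stable and unstable horocycles. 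I expect no real obstacle here: the only points requiring care are the bookkeeping — that right translation produces $\mathrm{Ad}_{\mathbf{a}_t^{-1}}$ rather than $\mathrm{Ad}_{\mathbf{a}_t}$ — and the fact that the noncompactness of $S\bb{H}^2$ forces one to commit to the homogeneous metric in order for the constants to be genuinely uniform.
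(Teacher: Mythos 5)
Your proposal is correct and follows essentially the same route as the paper: both fix the left-invariant metric making $H, X_+, X_-$ orthonormal, identify $E_s$ and $E_u$ with the spans of $X_+$ and $X_-$, and compute $dg^t$ on these fields from the conjugation of $N$ and $\overline{N}$ by $\mathbf{a}_t$ (your $\mathrm{Ad}_{\mathbf{a}_t^{-1}}X_\pm = e^{\mp t}X_\pm$ is exactly the differentiated form of the paper's identities $\mathbf{n}_u\mathbf{a}_t=\mathbf{a}_t\mathbf{n}_{ue^{-t}}$ and $\overline{\mathbf{n}_u}\mathbf{a}_t=\mathbf{a}_t\overline{\mathbf{n}_{ue^{t}}}$). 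Your extra remark that left-invariance of the metric supplies the uniformity lost to noncompactness is a worthwhile point that the paper leaves implicit.
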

\begin{proof}
    Fix a point $h\in G\simeq S\bb{H}^2$ and consider the geodesic flow $g^t(h)=h\mathbf{a}_t$. Consider the two leaves through $h$ given by the unstable and stable horocycle flows respectively, $u\mapsto h\mathbf{n}_u$ and $u\mapsto h\overline{\mathbf{n}_u}$. These leaves are generated by $X_+$ and $X_{-}$ respectively and we claim the line bundles over $h$ spanned by $X_{+}$ and $X_{-}$ are $E_s(h)$ and $E_u(h)$ respectively. Let us fix a left-invariant norm on the fibres of $TG$ by setting $|H|=|X_+|=|X_-|=1$. We can make the explicit (matrix) computation that $\mathbf{n}_u\mathbf{a}_t=\mathbf{a}_tn_{ue^{-t}}$ and $\overline{\mathbf{n}_u}\mathbf{a}_t=\mathbf{a}_t\overline{n_{ue^t}}$ which shows that
    \[
    dg^t(h)X_+=e^{-t}X_+, \qquad dg^t(h)X_-=e^{t}X_-.
    \]
    This shows that the line bundles $E_s(h)$ and $E_u(h)$ are preserved by the $g^t$ flow and that $X_+$ spans the stable sub-bundle and $X_-$ spans the unstable sub-bundle with Lyapunov exponent $\lambda=1$. Hence the geodesic flow $g^t$ on $G\simeq S\bb{H}^2$ is Anosov.
\end{proof}

The splitting of the tangent bundle into stable, unstable and flow directions induces a splitting of the cotangent bundle. These bundles are denoted $E_s^*, E_u^*$ and $E_0^*$ and are defined by 
\begin{equation}\begin{aligned}
E_s^* &= (E_s \oplus E_0)^0 \\
E_u^* &= (E_u \oplus E_0)^0 \\
E_0^* &= (E_s \oplus E_u)^0
\end{aligned}\end{equation}
where the superscript $0$ indicates the annihilator space. These spaces can also be defined by the following equations, where $h, u, s$ are the symbols of $H, X_+, X_-$ respectively:
\begin{equation}\begin{aligned}
E_s^* &= \{ h = u = 0 \} \\
E_u^* &= \{ h = s = 0 \} \\
E_0^* &= \{ s = u = 0 \}
\end{aligned}\end{equation}
Then under the bicharacteristic flow of $H$ (that is, the geodesic flow lifted to the cotangent bundle of $S\bb{H}^2$), we have $\dot u = u$ and $\dot s = -s$, so the flow is expanding on $E_u^*$ and contracting on $E_s^*$, which explains the notation for these line bundles. This splitting of the cotangent bundle is important in the definition of anisotropic Sobolev spaces (see Section~\ref{subsec:aniso}).

\subsection{Harmonic analysis on $\bb{D}$}
\begin{definition}[Busemann function]
Given $z \in \bb{D}$ and $b \in \bb{S}^1$. The Buseman function of $z$ and $b$, denoted $\langle z, b \rangle$, is defined to be the signed distance from $0 \in \bb{D}$ to the horocycle through $z$ tangent to $b$, with sign convention that $\ang{z, b}$ tends to infinity as $z \to b$ along any geodesic. 
\end{definition}
The family of geodesics emanating from $b\in \bb{S}^1$ are orthogonal to the family of horocycles tangent to $b$. The identity $\mathbf{n}_u\mathbf{a}_t=\mathbf{a}_t\mathbf{n}_{ue^{-t}}$ has the geometric interpretation that the signed distance between horocycles is well-defined, as any geodesic segment between the two horocycles has the same length $t$. 
%This means that any two horocycles tangent to the same point $b\in\bb{S}^1$ have a well-defined notion of distance between one another.  So for any $z,w\in\bb{D}$, we have that for all $z'\in H(z,b)$, $\text{dist}_{\bb{D}}(z, H(w,b)) = \text{dist}_{\bb{D}}(z',H(w,b))$ and similarly, for all $w'\in H(w,b)$, $\text{dist}_{\bb{D}}(w',H(z,b)) = \text{dist}_{\bb{D}}(w, H(z,b))$. This means the signed distance between the two horocycles $H(z,b)$ and $H(w,b)$ is a well-defined quantity. 
In particular it is the quantity given by $\langle z , b\rangle - \langle w, b\rangle$ where $z$ is any point on the first horocycle and $w$ is any point on the other horocycle. See Figure \ref{fig1}.

The function $e^{\ang{z, b}}$ coincides with the classical Poisson kernel on the disk, 
\begin{equation}
e^{\ang{z,b}} = P(z, b) : = \frac{1 - |z|^2}{|z-b|^2}.
\label{eq:Poisson}\end{equation}
In fact, by direct calculation one can verify that this function is constant on horocycles based at $b$, it equals $1$ when $z=0$, and the gradient $\nabla_z \langle z, b\rangle$, in the hyperbolic metric, has unit length. Using this fact about the gradient and the harmonicity of $P(z, b)$ (notice that harmonicity coincides for the Euclidean and the hyperbolic metric), it is easy to check that for $\mu \in \bb{C}$ and $b \in \partial \bb{D}$, the function $z \mapsto e^{\mu \langle z, b \rangle}$ is an eigenfunction of the hyperbolic Laplacian with eigenvalue $\mu (\mu-2)$.  We refer to these powers of the Poisson kernel $e^{\ang{z, b}}$ as (hyperbolic) plane waves, as they are analogous to Euclidean plane waves. Helgason used these hyperbolic plane waves to define a non-Euclidean Fourier transform on the hyperbolic disk.

The Busemann function is also useful in linking the two coordinate systems $(z, \theta)$ and $(z, b)$ that we have discussed. In fact, for fixed $z$ we have $d\theta = e^{\ang{z, b}} db$. (The easiest way to derive this is to consider a harmonic function $u$ on the disc with boundary values $f(b)$. Then we have by the mean value theorem
$$
u(z) = \frac{1}{2\pi}\int_{\bb{S}^1} f(b(\theta)) \, d\theta = \frac{1}{2\pi}\int_{\bb{S}^1} P(z, b) f(b) \, db. )
$$
This implies that the Haar measure in $(z, b)$ coordinates is 
\begin{equation}
dg = d\Vol(z) d\theta = e^{\ang{z, b}} d\Vol(z) db.
\label{eq:haar}\end{equation}

\begin{figure}[t]
\caption{On the left, a \textcolor{blue}{horocycle} through $z$ and $b$ and a \textcolor{blue}{horocycle} through $w$ and $b$ on the hyperbolic disk. The so-called Busemann function, $\langle z, b\rangle$, is the \textcolor{red}{distance} from the origin $o$ to this horocycle. $\langle z, b\rangle-\langle w, b\rangle$ represents the signed distance between the given horocycles. On the right, a family of horocycles through a single point $b\in B$. Any two horocycles through the same point $b$ are equidistant to each other.}\label{fig1}.
\includegraphics[scale=1.5]{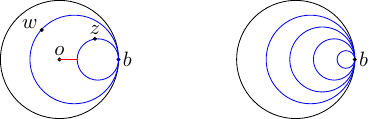}
\centering
\end{figure}

One of the important identities involving the Busemann function is how it changes under a left action of $\gamma\in G$ (acting on both $z$ and $b$, that is, using the coordinates $(z, b)$ on $G$). We note that $\ang{z,b}$ is not invariant under $\gamma$ since it depends on a choice of origin in $\bb{H}^2$. However, a difference $\ang{z, b} - \ang{w, b}$ is invariant. We thus have 
$$
\ang{\gamma z, \gamma b} - \ang{\gamma w, \gamma b} = \ang{z, b} - \ang{w, b} \text{ for all } \gamma \in G.
$$
Setting $w=0$ and noting that $\ang{0, b} = 0$ by definition, we find that for all $\gamma\in G$, we have
\begin{equation}\label{Busemann.identity}
    \langle \gamma z, \gamma b\rangle = \langle z, b\rangle + \langle \gamma 0 , \gamma b\rangle.
\end{equation}

Another useful identity involving the Busemann function is 
\begin{equation}\label{eq:jacobian.factor.on.b}
\gamma'(b) = e^{-\ang{\gamma(0), \gamma(b)}},
\end{equation}
which can be derived from \eqref{eq:Poisson} and the fact that $u \circ \gamma$ is harmonic on the disc if and only if $u$ is harmonic. We now state a result of Helgason, which defined the non-Euclidean Fourier transform, and showed some of its properties.

%We can sketch a proof of this identity. Since $\gamma$ is an isometry of $\bb{D}$, it preserves the space of horocycles on $\bb{D}$. Consequently, it maps the horocycle through $z$ and tangent to $b$ to the horocycle through $\gamma z$ tangent to $\gamma b$. It also maps the horocycle through $0$ tangent to $b$, to the horocycle through $\gamma 0$, tangent to $\gamma b$. Since $\gamma$ is an isometry, we have 
%\[
%\text{dist}_{\bb{D}}(H(\gamma z, \gamma b), H(\gamma 0, \gamma b))=\text{dist}_{\bb{D}}(H(z,b),H(0,b)).
%\]
%And since
%\[
%\text{dist}_{\bb{D}}(H(z_1,b),H(z_2,b)) = \langle z_1, b\rangle - \langle z_2, b\rangle
%\]
%for any $z_1,z_2\in\bb{D}$, and any $b\in B$, we consequently have 
%\[
%\langle \gamma z, \gamma b\rangle - \langle \gamma 0, \gamma b\rangle = \langle z, b\rangle - \langle 0 , b\rangle
%\]
%However $\langle 0, b\rangle=0$, which reduces the above equation to identity (\ref{Busemann.identity}).

%It is easy to check that for $\mu \in \bb{C}$ and $b \in \partial \bb{D}$, the function $z \mapsto e^{\mu \langle z, b \rangle}$ is an eigenfunction of the hyperbolic Laplacian with eigenvalue $\mu (2-\mu)$.  It is analogous to the Euclidean plane wave. Helgason used these hyperbolic plane waves to define a non-Euclidean Fourier transform on the hyperbolic disk.

\begin{theorem}[Helgason's non-Euclidean Fourier transform, Theorem 4.2 in \cite{helg.1}]
For any complex-valued function $f$ on $\bb{D}$, define the non-Euclidean Fourier transform by 
\[
\mathcal{F}f(b,r):=\int_{\bb{D}} f(z)e^{(\frac{1}{2}+ir)\langle z, b\rangle} d\Vol(z).
\]
for any $b\in B$ and $r\in\bb{C}$ for which this exists. If $f\in C^{\infty}_c(\bb{D})$, then there exists a pointwise inversion formula
\begin{equation}
f(z)=\int_{\bb{R}^+\times B} e^{(\frac{1}{2}+ir)\langle z, b\rangle} \mathcal{F}f(b,r) dp(r) db, \quad dp(r):=\frac{r}{2\pi}\tanh(\pi r)dr.
\label{eq:Fourinv}\end{equation}
%Additionally, the map $f\mapsto \mathcal{F}f$ is a bijection of $C^{\infty}_c(\bb{D})$ onto the space of smooth functions, $\psi$, on $B\times\bb{C}$ that is holomorphic in the second variable of uniform exponential type, meaning that there exists some $R>0$ such that for all $N>0$,
%\[
%\sup_{r\in\bb{C},b\in B} e^{-R |\text{Im} (r)|}(1+|r|)^N|\psi(b,r)|<\infty
%\]
%and which satisfies the functional equation
%\[
%\int_{B}\psi(b,r)e^{(\frac{1}{2}+ir)\langle z, b\rangle} db = \int_{B} \psi(b,-r) e^{(\frac{1}{2}-ir)\langle z, b\rangle} db.
%\]
This non-Euclidean Fourier transform extends to an isometry of $L^2(\bb{D}, d\Vol(z))$ to $L^2(\bb{R}^+\times B, dp(r) db)$, where $dp(r)$ is as in \eqref{eq:Fourinv}. % a certain Plancharel measure which takes the form $dp(r)=\frac{r}{4\pi}\tanh(\pi r)$.
\end{theorem}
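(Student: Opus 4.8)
The plan is to follow the classical route to the Helgason--Fourier inversion and Plancherel theorems \cite{helg.1}, reducing everything to the rank-one spherical (Mehler--Fock) transform by means of the $K$-isotypic decomposition of $L^2(\bb{D})$. First I would record the elementary mapping properties. For $f \in C^\infty_c(\bb{D})$ the function $z \mapsto \ang{z,b}$ is bounded on $\operatorname{supp} f$ uniformly in $b$, so the plane wave $e^{(\frac12+ir)\ang{z,b}}$ is bounded there by $e^{C|\im r|}$; hence $\mathcal{F}f(b,r)$ converges, is $C^\infty$ in $b$, and extends to an entire function of $r$. Since the plane wave is a $\Delta$-eigenfunction with eigenvalue a fixed quadratic polynomial $\omega(r)$, integration by parts gives $\mathcal{F}(\Delta^k f)(b,r) = \omega(r)^k\,\mathcal{F}f(b,r)$, so $\mathcal{F}f(b,r)$ is rapidly decreasing in $r$ on the real axis, uniformly in $b$. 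This legitimizes the Fubini interchanges used below.

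Next I would decompose $L^2(\bb{D}) = \bigoplus_{n\in\ZZ} \H_n$, where $\H_n$ is the space of $f$ with $f\circ\mathbf{k}_\alpha = e^{in\alpha} f$ for the rotations $\mathbf{k}_\alpha \in K$ fixing $0 \in \bb{D}$. Because $\mathbf{k}_\alpha$ fixes $0$, identity \eqref{Busemann.identity} gives $\ang{\mathbf{k}_\alpha z,\mathbf{k}_\alpha b}=\ang{z,b}$, and a change of variables then shows that $\mathcal{F}$ carries $\H_n$ into functions of the form $(b,r) \mapsto e^{in\beta(b)}\tilde f_n(r)$, where $\beta$ is the angular coordinate on $B\simeq\bb{S}^1$ and $\tilde f_n(r) := \mathcal{F}f(b_0,r)$ for any fixed $b_0$. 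Thus within $\H_n$ the transform is captured by the single scalar function $\tilde f_n$, the inversion formula reduces to $f = \int_{\bb{R}^+}\tilde f_n(r)\,\Phi_{n,r}\,dp(r)$ with $\Phi_{n,r}(z) := \int_B e^{(\frac12+ir)\ang{z,b}}e^{in\beta(b)}\,db \in \H_n$ (for $n=0$ this is the elementary spherical function $\varphi_r$, a Legendre function of $\cosh\dist(0,z)$; for general $n$ it is a Jacobi function), and the Plancherel pairing against $\overline{f}$ likewise localizes in each $\H_n$ because the characters $e^{in\beta}$ are orthogonal in $L^2(B,db)$.

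It then remains to prove, for each fixed $n$, the inversion and the $L^2$-isometry of the scalar transform $f \mapsto \tilde f_n$ on $\H_n$ with the density $dp(r)$. This is the classical Mehler--Fock transform together with its $K$-type analogues (Jacobi function transforms), which one proves by analysing the hypergeometric ODE satisfied by $\Phi_{n,r}$, extracting its large-distance asymptotics, and identifying the Harish-Chandra $c$-function, $|c(r)|^{-2}\,dr \propto r\tanh(\pi r)\,dr$; with $db$ normalized to arc length on $\bb{S}^1$ the constant works out to $dp(r)=\frac{r}{2\pi}\tanh(\pi r)\,dr$. Granting this, one reassembles by summing over $n$, using orthogonality of the $\H_n$ in $L^2(\bb{D})$ and of the $e^{in\beta}$ in $L^2(B,db)$, to obtain the inversion formula for all $f\in C^\infty_c(\bb{D})$ and the isometry $\|f\|^2_{L^2(\bb{D})} = \|\mathcal{F}f\|^2_{L^2(\bb{R}^+\times B,\,dp\,db)}$ on the dense subspace $C^\infty_c(\bb{D})$, hence on all of $L^2(\bb{D})$; surjectivity follows since the Mehler--Fock transform in each $\H_n$ is already known to be onto $L^2(\bb{R}^+,dp)$.

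The hard part — indeed the only genuinely analytic input — will be the spherical Plancherel computation of the previous paragraph, i.e.\ the evaluation of $|c(r)|^{-2}$ producing the factor $r\tanh(\pi r)$; a secondary point one should not skip is that the normalizing constant yielding $dp(r)$ is the \emph{same} in every $K$-type, a phenomenon special to rank one. Everything else — the mapping estimates, the Fubini justifications, the reduction to $K$-types, and the final reassembly — is routine bookkeeping once this classical fact (which we take from \cite{helg.1}) is in hand.
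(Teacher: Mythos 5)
This theorem is not proved in the paper at all: it is imported verbatim from Helgason (Theorem 4.2 of \cite{helg.1}), so there is no in-paper argument to compare against. Your outline is nonetheless a correct and standard route to the result, but it is not Helgason's: his proof of the cited theorem reduces the inversion formula to the \emph{spherical} (radial) Plancherel theorem by averaging the translate of $f$ over $K$ at each base point (using $\int_B \mathcal{F}f(b,r)e^{(\frac12+ir)\ang{z,b}}db = (f\times\varphi_r)(z)$), whereas you decompose $L^2(\bb{D})$ into $K$-isotypes $\mathcal{H}_n$ and invoke the Jacobi/Mehler--Fock transform in each. Both reductions terminate in the same analytic core, the evaluation $|c(r)|^{-2}\propto r\tanh(\pi r)$; the radialization argument gets by with only the $n=0$ case of it, while your route needs the additional fact that the $c$-functions of the higher Eisenstein integrals $\Phi_{n,r}$ satisfy $|c_n(r)|=|c_0(r)|$ for real $r$ (the Gamma-factor ratios are unimodular there) --- you correctly flag this as the point that must not be skipped, and it is genuinely where a careless execution would go wrong. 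One further caution: a careful run of your argument will surface the fact that the forward transform and the inversion integral must carry \emph{dual} exponents $\tfrac12\mp ir$ (as in Helgason's statement), whereas the theorem as transcribed above uses $\tfrac12+ir$ in both; your reduction to the scalar transforms would produce the corrected version, and you should say so rather than silently reproduce the statement as given.
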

We state another theorem of Helgason which will be useful to us.
\begin{theorem}[Boundary distribution of Laplacian eigenfunctions, Theorem 4.3 in \cite{helg.1}]\label{thm.helgason.eigenfunction.representation.formula}
    For any $r\in \bb{C}$, if $\varphi_r$ is a (smooth) function on $\bb{D}$ which satisfies $\Delta\varphi_r=(1/4+r^2)\varphi_r$ and grows at most exponentially in the hyperbolic distance, $d_{\bb{D}}(0,z)$, i.e $|\varphi_r(z)|\leq Ce^{cd_{\bb{D}}(0,z)}$ for every $z\in\bb{D}$ with $C,c>0$ some arbitrary constants, then there exists a distribution $T\in\mathcal{D}'(B)$ such that\footnote{In this article we take $T$ to be a distributional function, differing from the convention in \cite{ana.zel.2} where it is taken to be a distributional density. This changes the form of the transformation under $\gamma \in G$ by the Jacobian factor \eqref{eq:jacobian.factor.on.b}.}
    \[
    \varphi_r(z)=\int_Be^{(\frac{1}{2}+ir)\langle z, b\rangle}T(b) \, db
    \]
   % (The notation above refers to the distributional pairing of $T$ with $e^{(\frac{1}{2}+ir)\langle z, b\rangle}$ as a smooth function of $b$ for every fixed $z$.) 
    Moreover the distribution $T$ is unique if $1/2+ir\neq 0, -1, -2, \ldots$. Hence in this case, we may label $T$ as $T_{\varphi_r}$ since $T_{\varphi_r}$ is uniquely determined from $\varphi_r$ and vice versa. 
    \end{theorem}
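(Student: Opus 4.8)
\emph{Proof proposal.}
The plan is to decompose both $\varphi_r$ and the plane wave $z\mapsto e^{\mu\ang{z,b}}$, with $\mu:=\tfrac12+ir$, into $K$-isotypic components — angular Fourier modes on the disk — thereby reducing everything to a one-parameter family of radial ODEs, and then to recover $T$ as the distribution on $B\simeq\bb{S}^1$ whose Fourier coefficients are the $K$-type coefficients of $\varphi_r$, suitably renormalized. Writing $z=\rho e^{i\omega}$, the equation $\Delta u=(\tfrac14+r^2)u$ (satisfied by $\varphi_r$ by hypothesis and by $e^{\mu\ang{z,b}}$ as recalled above) restricted to the $K$-type $n$ (functions $f(\rho)e^{in\omega}$) is a second-order ODE with a regular singular point at $\rho=0$ with indicial roots $\pm|n|$; its solution regular at the origin is unique up to a scalar and equals
\[
\Phi_{\mu,n}(\rho)=(1-\rho^2)^{\mu}\,\rho^{|n|}\,{}_2F_1\!\big(\mu,\,\mu+|n|;\,|n|+1;\,\rho^2\big).
\]
Since $\varphi_r$ solves an elliptic equation it is real-analytic on $\bb{D}$, so its Fourier series in $\omega$ converges, each term is an eigenfunction regular at the origin hence a multiple of $\Phi_{\mu,n}$, and $\varphi_r(z)=\sum_{n\in\ZZ}a_n\,\Phi_{\mu,n}(\rho)\,e^{in\omega}$ for uniquely determined coefficients $a_n$.

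The plane wave is expanded the same way: using $|z-b|^2=(1-\bar bz)(1-b\bar z)$ for $b\in\bb{S}^1$, one has $e^{\mu\ang{z,b}}=P(z,b)^\mu=(1-|z|^2)^\mu(1-\bar bz)^{-\mu}(1-b\bar z)^{-\mu}$, and the binomial series yields, after collecting the mode with $b$-dependence $b^n$,
\[
\frac{1}{2\pi}\int_{B}e^{\mu\ang{z,b}}\,b^{\,n}\,db=c_n(\mu)\,\Phi_{\mu,n}(\rho)\,e^{in\omega},\qquad c_n(\mu)=\frac{\Gamma(\mu+|n|)}{\Gamma(\mu)\,\Gamma(|n|+1)}.
\]
(This simultaneously confirms the formula for $\Phi_{\mu,n}$.) Two facts about $c_n(\mu)$ drive the theorem: by Stirling it is of polynomial size $\sim|n|^{\mu-1}$ as $|n|\to\infty$, so that both $c_n(\mu)$ and $1/c_n(\mu)$ grow at most polynomially in $|n|$; and $c_n(\mu)\neq0$ for \emph{every} $n$ precisely when $\mu=\tfrac12+ir\notin\{0,-1,-2,\dots\}$ (when $\mu=-m$ one has $c_n(\mu)=0$ exactly for $|n|>m$).

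The main obstacle is to convert the hypothesis $|\varphi_r(z)|\le Ce^{c\,\dist_{\bb{D}}(0,z)}$ into a polynomial bound on $a_n$. From
\[
|a_n|\,|\Phi_{\mu,n}(\rho)|=\Big|\tfrac1{2\pi}\!\int_0^{2\pi}\!\varphi_r(\rho e^{i\omega})e^{-in\omega}\,d\omega\Big|\le\max_{|z|=\rho}|\varphi_r(z)|\le C\Big(\tfrac{1+\rho}{1-\rho}\Big)^{c},
\]
valid for all $\rho\in(0,1)$, one needs a lower bound $\sup_{\rho\in I_n}|\Phi_{\mu,n}(\rho)|\ge c_0(1+|n|)^{-N_0}$ over a suitable family of intervals $I_n\subset(0,1)$ on which $1-\rho\sim|n|^{-1}$. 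Here $\re\mu=\tfrac12$ places one in the borderline case $\re(c-a-b)=0$ of the connection formula for ${}_2F_1$ at $1$: as $\rho\to1$ one has ${}_2F_1(\mu,\mu+|n|;|n|+1;\rho^2)\sim A_n+B_n(1-\rho^2)^{-2ir}$ with $A_n,B_n$ of polynomial size in $|n|$, and since the factor $(1-\rho^2)^{-2ir}$ sweeps through a full oscillation as $\rho$ ranges over $I_n$, the magnitude of this expression exceeds a polynomial lower bound somewhere on $I_n$, which rules out accidental cancellation of the two terms. Combined with the display this gives $|a_n|\le C_1(1+|n|)^{N}$ with $N$ depending only on $c$ and $\im r$. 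This estimate is the technical heart of the proof.

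Finally, assume $\mu\notin\{0,-1,-2,\dots\}$ and define $T\in\mathcal{D}'(B)$ by prescribing its Fourier coefficients $\widehat T(n):=a_n/c_n(\mu)$; by the above this is a sequence of polynomial growth, so $T$ is a genuine distribution on $\bb{S}^1\simeq B$. Pairing $T$ with the function $b\mapsto e^{\mu\ang{z,b}}$ — which is smooth in $b$ for each interior $z$, with Fourier coefficients in $b$ decaying faster than any polynomial — and decomposing into $K$-types, the plane-wave formula yields
\[
\int_B e^{\mu\ang{z,b}}\,T(b)\,db=\sum_{n\in\ZZ}\widehat T(n)\,c_n(\mu)\,\Phi_{\mu,n}(\rho)\,e^{in\omega}=\sum_{n\in\ZZ}a_n\,\Phi_{\mu,n}(\rho)\,e^{in\omega}=\varphi_r(z),
\]
the asserted representation. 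For uniqueness, if the left-hand side vanishes identically then each $K$-type forces $\widehat T(n)\,c_n(\mu)\,\Phi_{\mu,n}\equiv0$, and since $\Phi_{\mu,n}\not\equiv0$ and $c_n(\mu)\neq0$ for all $n$ we get $\widehat T(n)=0$, i.e.\ $T=0$; conversely, if $\mu=-m$ then the modes $|n|>m$ of $T$ are invisible, so uniqueness fails, which accounts for the stated dichotomy. (In that exceptional case existence as stated also requires $\varphi_r$ to have no $K$-types with $|n|>m$; this is harmless for the applications in this paper, where $\varphi_r$ lifts an $L^2$-eigenfunction on a compact hyperbolic surface and is therefore either constant or associated with $\mu\notin\{0,-1,-2,\dots\}$.)
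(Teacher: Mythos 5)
The paper does not prove this statement: it is quoted from Helgason \cite{helg.1} and used as a black box, so there is no internal argument to compare against. What you have written is, in substance, the classical proof of Helgason's theorem on the disk: decompose into $K$-types, identify each mode of an eigenfunction regular at the origin with the explicit hypergeometric function $\Phi_{\mu,n}$, read off the constants $c_n(\mu)$ from the binomial expansion of the Poisson kernel, and convert the exponential growth hypothesis into polynomial bounds on the mode coefficients via the behaviour of ${}_2F_1$ near $\rho=1$. The skeleton is sound, and you have correctly located both the technical heart (the lower bound on $\sup_{I_n}|\Phi_{\mu,n}|$) and the exact mechanism behind the exceptional set ($c_n(\mu)=0$ for $|n|>m$ when $\mu=-m$). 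Three points would need attention in a complete write-up. First, the oscillation argument applies only when $c-a-b=-2ir$ is purely imaginary and nonzero, i.e.\ for real $r\neq 0$; for $r=0$ the connection formula degenerates to the logarithmic case, and for $\re(c-a-b)\neq 0$ one of the two terms dominates outright. These cases are easier, but the theorem is asserted for all $r\in\CC$, so they must be treated. Second, the hypergeometric parameters $\mu+|n|$ and $|n|+1$ grow with $n$, so the remainder in the connection formula is controlled only for $1-\rho\lesssim \epsilon|n|^{-1}$; your interval $I_n$ must sit in that regime while still containing a full period of $(1-\rho^2)^{-2ir}$, which costs only a polynomial factor in $|n|$ but is precisely where uniformity in $n$ has to be verified. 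Third, your closing parenthetical is correct and worth keeping: existence as literally stated fails at $\mu\in\{0,-1,-2,\dots\}$ (already at $\mu=0$ every Poisson transform is constant while every harmonic function is an eigenfunction), so the non-exceptional hypothesis is really needed for existence too; this is harmless for the paper, whose choice $r_j\in[0,\infty)\cup i[-1/2,0]$ keeps $\tfrac12+ir_j$ away from the exceptional set.
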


A corollary to Helgason's theorem, Theorem \ref{thm.helgason.eigenfunction.representation.formula}, is information about how the distributions $T\in\mathcal{D}'(B)$ vary under a discrete group (also called a Fuschian group) of cocompact isometries $\Gamma\leq G$ when $T$ is the boundary distribution of an eigenfunction on the compact hyperbolic surface $\Gamma\backslash\bb{D}$.
\begin{corollary}\label{cor:resonance.distribution}
    Let $\varphi_r\in C^{\infty}(\Gamma\backslash\bb{D})$ satisfy $\Delta\varphi=(1/4+r^2)\varphi$ pointwise in $\bb{D}$. Then, for $1/2+ir\neq 0, -1, -2, \ldots$, there exists a unique boundary distribution, $T_{\varphi_r}\in\mathcal{D}'(B)$, such that for any $\gamma\in\Gamma$,
    \[
    T(\gamma b) = e^{(\frac{1}{2}-ir)\langle \gamma 0, \gamma b\rangle} T(b).
    \]
    In particular,
    \[
    e^{(-\frac{1}{2}+ir)\langle \gamma z, \gamma b\rangle}T(\gamma b) = e^{(-\frac{1}{2}+ir)\langle z, b\rangle}T(b)
    \]
    for every $\gamma\in\Gamma$, (using identity (\ref{Busemann.identity})), and so 
    \[
    e^{(-\frac{1}{2}+ir)\langle z, b\rangle}T(b)\in D'(\Gamma\backslash G),
    \]
  that is, it is invariant by $\Gamma$ and therefore forms a well-defined distribution over $\Gamma\backslash G$.
\end{corollary}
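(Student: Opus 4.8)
The plan is to feed the $\Gamma$-invariance of $\varphi_r$ through Helgason's representation formula (Theorem~\ref{thm.helgason.eigenfunction.representation.formula}) and read off a transformation law for its boundary distribution. Since $\Gamma$ is cocompact, $\varphi_r$ descends to a smooth function on the compact surface $\Gamma\backslash\bb{D}$, so its lift to $\bb{D}$ is bounded and in particular grows at most exponentially in $d_{\bb{D}}(0,z)$. As $1/2+ir\neq 0,-1,-2,\ldots$, Theorem~\ref{thm.helgason.eigenfunction.representation.formula} then provides a \emph{unique} $T=T_{\varphi_r}\in\mathcal{D}'(B)$ with
\[
\varphi_r(z)=\int_B e^{(\frac{1}{2}+ir)\langle z,b\rangle}\,T(b)\,db,
\]
the integral being the pairing of $T$ with the smooth test function $b\mapsto e^{(\frac{1}{2}+ir)\langle z,b\rangle}$ on $B$. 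This settles existence and uniqueness of $T_{\varphi_r}$; it remains to identify how it transforms under $\Gamma$.

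Applying the representation formula at the point $\gamma z$ and substituting $b\mapsto\gamma b$ in the circle variable (i.e.\ using the standard pullback-by-diffeomorphism rule for the distribution $T$), the Jacobian of $b\mapsto\gamma b$ on $B$ is $\gamma'(b)=e^{-\langle\gamma 0,\gamma b\rangle}$ by \eqref{eq:jacobian.factor.on.b}, while the Busemann exponent transforms via \eqref{Busemann.identity} as $\langle\gamma z,\gamma b\rangle=\langle z,b\rangle+\langle\gamma 0,\gamma b\rangle$. Collecting the three exponential factors (the factor $e^{(\frac{1}{2}+ir)\langle z,b\rangle}$, the factor $e^{(\frac{1}{2}+ir)\langle\gamma 0,\gamma b\rangle}$ coming from the exponent, and $e^{-\langle\gamma 0,\gamma b\rangle}$ from the Jacobian) yields
\[
\varphi_r(\gamma z)=\int_B e^{(\frac{1}{2}+ir)\langle z,b\rangle}\Big(e^{(-\frac{1}{2}+ir)\langle\gamma 0,\gamma b\rangle}\,T(\gamma b)\Big)\,db .
\]
By $\Gamma$-invariance $\varphi_r(\gamma z)=\varphi_r(z)$, so the distribution $e^{(-\frac{1}{2}+ir)\langle\gamma 0,\gamma b\rangle}T(\gamma b)$ represents the same eigenfunction $\varphi_r$ as $T(b)$ does via Theorem~\ref{thm.helgason.eigenfunction.representation.formula}; invoking the uniqueness clause of that theorem once more (again using $1/2+ir\neq 0,-1,-2,\ldots$) gives $e^{(-\frac{1}{2}+ir)\langle\gamma 0,\gamma b\rangle}T(\gamma b)=T(b)$, i.e.\ the asserted identity $T(\gamma b)=e^{(\frac{1}{2}-ir)\langle\gamma 0,\gamma b\rangle}T(b)$.

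For the two displayed consequences, substitute this transformation law into $e^{(-\frac{1}{2}+ir)\langle\gamma z,\gamma b\rangle}T(\gamma b)$; the exponentials combine to $e^{(-\frac{1}{2}+ir)(\langle\gamma z,\gamma b\rangle-\langle\gamma 0,\gamma b\rangle)}T(b)$, and \eqref{Busemann.identity} gives $\langle\gamma z,\gamma b\rangle-\langle\gamma 0,\gamma b\rangle=\langle z,b\rangle$, so $e^{(-\frac{1}{2}+ir)\langle\gamma z,\gamma b\rangle}T(\gamma b)=e^{(-\frac{1}{2}+ir)\langle z,b\rangle}T(b)$. This says precisely that the distribution $(z,b)\mapsto e^{(-\frac{1}{2}+ir)\langle z,b\rangle}T(b)$ on $G\simeq\bb{D}\times B$ is invariant under the diagonal left action of $\Gamma$, and therefore descends to a well-defined distribution on $\Gamma\backslash G$.

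I expect the only genuinely delicate point to be the change-of-variables step: one must make precise the meaning of $T(\gamma b)$ as the pullback of $T$ under the circle diffeomorphism $b\mapsto\gamma b$ and track the Jacobian \eqref{eq:jacobian.factor.on.b} correctly, since it is exactly this factor — absent under the distributional-density convention of \cite{ana.zel.2}, as noted in the footnote to Theorem~\ref{thm.helgason.eigenfunction.representation.formula} — that converts the naive exponent $-\frac{1}{2}-ir$ into $\frac{1}{2}-ir$. Everything else is bookkeeping with the two Busemann identities \eqref{Busemann.identity} and \eqref{eq:jacobian.factor.on.b}.
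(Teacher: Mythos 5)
Your proof is correct and follows exactly the route the paper intends (the corollary is stated in the paper without a separate proof, with the Busemann identity cited inline): apply Helgason's representation at $\gamma z$, change variables $b\mapsto\gamma b$ using the Jacobian \eqref{eq:jacobian.factor.on.b} and the cocycle identity \eqref{Busemann.identity}, and invoke the uniqueness clause of Theorem~\ref{thm.helgason.eigenfunction.representation.formula} to extract the transformation law. Your bookkeeping of the exponents, including the Jacobian factor that shifts $-\tfrac12-ir$ to $\tfrac12-ir$ under the distributional-function convention, is accurate.
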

%%%%%%%%%%%%%%%%%%%%%%%%%%%%%%%%%%%%%%%%%%This remark below is maybe too long, but its nice to have.%%%%%%%%%%%%%%%%%%
%\begin{remark} Let us remark that for a diffeomorphism, $\sigma$ of $B$ that $T(d\sigma b)\in \mathcal{D}'(B)$ is defined by
%\begin{equation}\label{eqn.distribution.vs.density}
%\int_B f(b)T(d\sigma b) = \int_B f(\sigma^{-1}b) T(db).
%\end{equation}
%In particular, this distinguishes $T(db)$ as a distribution rather than a distribution density. We understand a distribution density to generally refer to ``the density (function) of a distribution". Recall that if we have a function $g$ which is integrable in $B$ under the measure $db$, this defines the distribution 
%\[
%f\mapsto \int_B f(b) g(b) db.
%\]
%If the density $g(b)$ is pulled back under a diffeomorphism $\sigma$, then the distribution with density function $g(\sigma b)$ is defined by
%\[
%\int_B f(b) g(\sigma b) db = \int_B f(\sigma^{-1}b) g(b) |\det d\sigma^{-1}(b)| db.
%\]
%Note how this differs from equation (\ref{eqn.distribution.vs.density}) defining the action of $T(d\sigma b)$. This is because $g$ is a distribution density whereas $T(db)$ a distribution. The action of $T(d\sigma b)$ corresponds analogously to pulling back the distribution $g(b)db$ under $\sigma$ to get the distribution $g(\sigma b) d(\sigma b)$.
%\end{remark}

\subsection{Zelditch pseudodifferential calculus on $\bb{D}$}
Zelditch uses Helgason's non-Euclidean Fourier transform to define a left-invariant pseudodifferential quantisation on $\bb{D}$, \cite{zel.1}.  %An important advantage of his quantisation is the left invariance between symbols and operators that the quantisation satisfies with respect to the group $G$.
For any operator $A: C^{\infty}(\bb{D})\mapsto C^{\infty}(\bb{D})$, Zelditch defines the \emph{complete symbol} of $A$ to be the function $a(z,b,r)\in C^{\infty}(\bb{D}\times B \times \bb{R}^+)$ by
\begin{equation}\label{eq:symbol}
Ae^{(\frac{1}{2}+ir)\langle z, b\rangle} = a(z,b,r) e^{(\frac{1}{2}+ir)\langle z, b\rangle}
\end{equation}
Given a symbol, $a\in C^{\infty}(\bb{D}\times B \times \bb{R}^+)$, the definition of the pseudodifferential operator $\Op(a)$ acting on $u\in C^{\infty}_c(\bb{D})$ is consequently,
\[
\Op(a)u(z)=\frac{1}{2\pi}\int_{\bb{D}_w \times B_b \times \bb{R}^+_r} a(z, b, r)e^{(\frac{1}{2}+ir)\langle z, b \rangle} e^{(\frac{1}{2}-ir)\langle w, b \rangle} u(w) dw db dp(r)
\]
where $dp(r)$ is the same Plancharel measure as in \eqref{eq:Fourinv}. The Fourier inversion formula shows that $Au=\Op(a)u$ for $A$ with complete symbol $a(z,b,r)$ and $u\in C^{\infty}_c(\bb{D})$, hence we have an exact correspondence between symbols $a\in C^{\infty}(\bb{D}\times B \times \bb{R}^+)$ and operators $A: C^{\infty}(\bb{D})\mapsto C^{\infty}(\bb{D})$ on the class of functions $C^{\infty}_c(\bb{D})$.

%extends to functions $u(z)\in C^{\infty}_c(\bb{D})$ by the hyperbolic Fourier transform on $\bb{D}$.

%\[
%\Op(a)u(z) := \int_{\bb{R} \times B} a(z,b,r)e^{(\frac{1}{2} + i r)\langle z, b \rangle} \mathcal{F}u(b,r) dp(r)db
%\]
%This definition of $\Op(a)$ relates the Schwartz kernel of the operator, $K(z,w)$, and the symbol in the following way
%\[
%K(z,w)=\int_{B\times\bb{R}} a(z,b,r) e^{(\frac{1}{2}+ir)\langle z, b\rangle} e^{(\frac{1}{2}-ir)\langle w, b\rangle} db dp(r),
%\]
%and if the symbol has polynomial growth in $r$, then this definition remains a well-defined oscillatory integral.

Let $\Gamma$ be a subgroup of $G$. 
The Zelditch calculus has the important property that a Zelditch quantised pseudodifferential operator, $\Op(a)$, commutes with $\Gamma$, in the sense that it commutes with the operators $T_\gamma$, $\gamma \in \Gamma$ acting by  $(T_{\gamma}f)(z)=f(\gamma z)$, if and only if the symbol $a$ is $\Gamma$-invariant in the sense 
$$a(\gamma z, \gamma b, r) = a(z, b, r) \text{ for all } \gamma \in \Gamma, (z, b) \in G, r \in \bb{R}^+.
$$
See \cite{ana.zel.2} for details. 
%preserves $\gamma$-periodicity, if and only if the symbol is left-$\gamma$-invariant.  An operator $A$ preserves $\gamma$-periodicity means that $A$ commutes with $T_{\gamma}$ where $T_{\gamma}$ is the left action by $\gamma$ on functions of $z$, i.e $T_{\gamma}f(z)=f(\gamma z)$.  After all, since
%\[
%a(\gamma z,\gamma b,r)=e^{-(\frac{1}{2}+ir)\langle \gamma z, \gamma b\rangle} (Ae^{(\frac{1}{2}+ir)\langle \cdot, \gamma b\rangle})(\gamma z),
%\]
%assuming that $AT_{\gamma}=T_{\gamma}A$ makes this expression equivalent to
%\[
%e^{-(\frac{1}{2}+ir)\langle \gamma z, \gamma b\rangle}(Ae^{(\frac{1}{2}+ir)\langle \gamma \cdot, \gamma b\rangle})(z)
%\]
%and then applying identity (\ref{Busemann.identity}) on the Busemann function reduces this expression back to $a(z,b,r)$. Conversely, if we assume $a(z,b,r)=a(\gamma z, \gamma b, r)$, then we can easily deduce that
%\[
%A(e^{(\frac{1}{2}+ir)\langle \cdot, b\rangle})(z)=A(e^{(\frac{1}{2}+ir)\langle \gamma^{-1} \cdot, b\rangle})(\gamma z)
%\]
%which shows that $A=T_{\gamma^{-1}}AT_{\gamma}$ on the class of functions $e^{(\frac{1}{2}+ir)\langle z, b\rangle}$ and hence on all of $C^{\infty}_c(\bb{D})$ via linearity and the Fourier transform.

Now suppose that $\Gamma$ be a discrete cocompact subgroup of $G$, that is, such that the quotient $\Gamma \backslash \bb{D}$ is a compact hyperbolic surface $X$. The invariance property just described shows that if $a$ is a $\Gamma$-invariant symbol, then $\Op(a)$ at least formally maps $\Gamma$-invariant functions to $\Gamma$-invariant functions, and hence induces an operator on $X$. %However,  we need to discuss precisely the class of symbols which are required to induce a well-defined pseudodifferential operators on the compact quotient $X=\Gamma\backslash\bb{D}=\Gamma\backslash G /K$.

%To extend the Zelditch calculus to operators on a compact quotient, $X=\Gamma\backslash \bb{H}^2=\Gamma\backslash G /K$, we will need to restrict ourselves to an appropriate symbol class so that the periodisation is well-defined.

Given an operator on $\bb{D}$ with Schwartz kernel $K(z,w)$ that is singular only on the diagonal and which is invariant under the left $\Gamma$-action of a discrete cocompact group, i.e $K(z,w)=K(\gamma z, \gamma w)$, if $K(z,\cdot)$ decays fast enough (so that the series below in (\ref{eq:kernel.on.quotient}) converges absolutely), we can define an induced operator on $\Gamma\backslash\bb{D}$ which has Schwartz kernel $K^{\Gamma}$ defined by the series
\begin{equation}\label{eq:kernel.on.quotient}
K^{\Gamma}(z,w)=\sum_{\gamma\in\Gamma}K(z,\gamma w).
\end{equation}
The fact that this is the appropriate induced kernel can be checked by the calculation
\begin{equation}
\begin{split}
\int_{X}K^{\Gamma}(z,w)f^{\Gamma}(w)dw &= \sum_{\gamma\in\Gamma} \int_{\bb{D}} K(z,\gamma w) f^{\Gamma}(w) dw \\
&= \sum_{\gamma\in\Gamma}\int_{\bb{D}}K(z, w)f^{\Gamma}(\gamma^{-1}w)dw = \int_{\bb{D}}K(z,w)f(w)dw
\end{split}
\end{equation}
where $f^{\Gamma}$ can be considered as the restriction of the $\Gamma$-periodic (automorphic) function $f$ to a fundamental domain $X$, such that
\[
\sum_{\gamma\in\Gamma}f^{\Gamma}(\gamma^{-1}w)=f(w).
\]
Rather than consider $f^{\Gamma}$ as a sharp cutoff of $f$ to a fundamental domain, we could also define $f^{\Gamma}(w):=\chi(w)f(w)$ where $\chi$ is a smooth function on $\bb{D}$ with the property that
\[
\sum_{\gamma\in\Gamma}\chi(\gamma^{-1}w)=1.
\]
The required decay of $K(z,w)$ away from the diagonal is implied by the symbol $a(z,b,r)$, having an analytic continuation in $r$ to a strip of suitable width, cf. Eguchi-Kowata \cite{eguchi.kowata}.

We also finally remark that through the identification of 
\[
S^*X \times \bb{R}^+ \simeq T^*X
\]
via polar coordinates in the fibres of $T^*X$, this means for $X=\Gamma\backslash\bb{D}$, we can identify 
\[
\Gamma\backslash (\bb{D}\times B)\times \bb{R}^+ \simeq T^*X
\]
since $ \Gamma\backslash (\bb{D}\times B)\simeq \Gamma\backslash G \simeq S^*X$.
Consequently, the class of smooth functions on the cotangent bundle of $X$ can be identified with the class of smooth functions in coordinates $z\in\bb{D}, b\in B, r\in\bb{R}^+$ which is left-$\Gamma$-invariant in the $(z,b)$ variables.  Hence, standard classes of symbols can be identified with symbols in the Zelditch calculus.  For example, the standard H\"{o}rmander symbol classes $S^m_{1,0}(T^*X)$ is equivalent via this identification to the left $\Gamma$-invariant class of symbols, $a\in C^{\infty}(\Gamma\backslash G\times\bb{R}^+)$, where for any $s\in\bb{N}$ and any multiindex $\alpha=(\alpha_1,\alpha_2,\alpha_3)$, there exists a constant $C_{s,\alpha}>0$ such that
\[
|(r\partial_r)^s H^{\alpha_1}X_+^{\alpha_2}X_{-}^{\alpha_3} a(g,r)|< C_{s,\alpha}(1+r)^m
\]
The work of Zelditch, \cite{zel.1}, goes into more detail of this pseudodifferential calculus, giving formulae for the composition, adjoints and commutators of such pseudodifferential operators, as well as deriving an analogue of Egorov's formula and Friedrichs symmetrization.

\subsection{Faure-Sj\"{o}strand anisotropic Sobolev spaces}\label{subsec:aniso}
Since the geodesic flow on $S^*X$ is a contact Anosov flow, the theory of anisotropic Sobolev spaces for contact Anosov flows introduced by Faure-Sj\"{o}strand, \cite{fau.sjo.1}, will be useful to us.  

We need a method to talk about the regularity of distributions appearing on $S^*X$. For this purpose, we will use the standard theory of microlocal (or semiclassical) analysis on $C^{\infty}$ compact manifolds, say as detailed in \cite{zwo.1}. Note that this is a different pseudodifferential calculus than the Zelditch calculus; we use the Zelditch calculus for the exact correspondence it brings between operators on $X$ and symbols on $T^*X\simeq S^*X\times\bb{R}^+$.

We recall Theorem 4 in Nonenmacher-Zworski, \cite{non.zwo.1}, which will suffice for our purposes. It reads, in our context as:

\begin{theorem}[Theorem 4 in \cite{non.zwo.1}]
\label{thm.non.zwo}
    Let $H$ be the generator of geodesic flow on $S^*X$. Consider $P=-iH$ as the self-adjoint operator on $L^2(S^*X, d\nu)$ with domain $\mathcal{D}(P)=\{u\in L^2(S^*X): Pu\in L^2(S^*X)\}$ where $\nu$ is the Liouville measure on $S^*X$.  The minimal asymptotic unstable expansion rate for the geodesic flow,
    \[
    \lambda_0:=\liminf_{t\rightarrow\infty}\frac{1}{t}\inf_{x\in S^*X} \log(\det dg^t|_{E^u(x)})
    \]
    equals one because the geodesic flow has constant unit Lyapunov exponent, as shown in Lemma \ref{lemma.geodesic.is.anosov}. For any $s>0$, there exists a Hilbert space $\mathcal{H}^{s\mathcal{G}}(S^*X):=e^{-s\mathcal{G}}L^2(S^*X)$ such that
    \[
    C^{\infty}(S^*X)\subset \mathcal{H}^{s\mathcal{G}}(S^*X)\subset \mathcal{D}'(S^*X)
    \]
    and the operator family $(P-z):\mathcal{D}^{s\mathcal{G}}\mapsto \mathcal{H}^{s\mathcal{G}}$ is meromorphic in the half plane $\Im z > -s$, admitting finitely many Pollicott-Ruelle resonances ( poles of the resolvent $(P-z)^{-1}$) in the strip $\Im s > -1/2+\epsilon$, for any $\epsilon>0$, including a simple pole at $z=0$ where the residue is a rank one projection operator onto the eigenspace of constants on $S^*X$. The resolvent estimate $||(P-z)^{-1}||_{\mathcal{H}^{s\mathcal{G}}\mapsto \mathcal{H}^{s\mathcal{G}}} \leq C\langle z\rangle ^N$ holds for $\Im z > 1/2$, $\Re z>C$ for some constants $C, N>0$.
\end{theorem}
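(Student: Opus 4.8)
This is \cite[Theorem 4]{non.zwo.1}, whose functional-analytic framework originates with Faure--Sj\"ostrand \cite{fau.sjo.1}; the plan is to reproduce their argument and to observe that the only geometric input beyond Lemma \ref{lemma.geodesic.is.anosov} is the constant-curvature structure already recorded above. Regard $P = -iH$ as a first-order pseudodifferential operator on the compact $3$-manifold $S^*X$, with real principal symbol $p(\rho,\xi) = \xi(H_\rho)$, the pairing of a covector $\xi \in T_\rho^*(S^*X)$ with the generator of the geodesic flow. Its Hamilton flow on $T^*(S^*X)$ is the lift of the geodesic flow: it fixes the zero section, preserves the conic subbundles $E_0^*, E_s^*, E_u^*$ introduced above, and by Lemma \ref{lemma.geodesic.is.anosov} contracts $E_s^*$ and expands $E_u^*$ at the unit exponential rate. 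Since $E^u$ is one-dimensional, $\det(dg^t|_{E^u(x)}) = e^{t}$ for every $x$, so $\lambda_0 = 1$ at once.

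First I would construct an \emph{escape function} $\G \in C^\infty(T^*(S^*X))$ of the form $\G = m\,\log\langle\xi\rangle$, with $m$ a smooth order function equal to $+1$ near $E_s^*$, to $-1$ near $E_u^*$, and monotone along the flow in between, arranged so that $\G$ decreases along the lifted flow, strictly so (at rate $\log\langle\xi\rangle$) outside a conic neighbourhood of $E_0^*$ and of the zero section. Quantizing $\G$ and exponentiating yields the anisotropic Sobolev space $\mathcal{H}^{s\G}(S^*X) = e^{-s\G}L^2(S^*X)$, with $C^\infty(S^*X) \subset \mathcal{H}^{s\G} \subset \mathcal{D}'(S^*X)$ immediate from the logarithmic order of the weight, and one sets $\mathcal{D}^{s\G} = \{ u \in \mathcal{H}^{s\G} : Pu \in \mathcal{H}^{s\G}\}$. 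Studying $P$ on $\mathcal{H}^{s\G}$ is equivalent to studying the conjugated operator $\Psg := e^{s\G}Pe^{-s\G}$ on $L^2(S^*X)$, which has the form $P + isB$ with $B$ self-adjoint and of principal symbol the derivative of $\G$ along the lifted flow, hence $\le 0$ and $\le -c\log\langle\xi\rangle$ outside a conic neighbourhood of $E_0^*$ and of the zero section. Because $E_u^*$ is a radial source and $E_s^*$ a radial sink for the Hamilton flow of $p$ on the cosphere bundle of $S^*X$, combining elliptic estimates off the characteristic set, real-principal-type propagation along it, and radial-point estimates at $E_u^*$ and $E_s^*$ one obtains, for $\Im z > -s$,
\[
\|u\|_{L^2(S^*X)} \le C \|(\Psg - z)u\|_{L^2(S^*X)} + C\|u\|_{H^{-N}(S^*X)}
\]
together with the analogous bound for the adjoint, so that $\Psg - z : \mathcal{D}(\Psg) \to L^2(S^*X)$ is Fredholm of index $0$ on the half-plane $\Im z > -s$.

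To pass to meromorphy, note that for $\Im z$ large $(P - z)^{-1}$ exists and is $O(1/|\Im z|)$ on $L^2$ by self-adjointness of $P$, and $\Psg - z$ differs from $P - z$ by a relatively compact lower-order term, hence is invertible there; the analytic Fredholm theorem then makes $(\Psg - z)^{-1}$, equivalently $(P - z)^{-1} : \mathcal{H}^{s\G} \to \mathcal{H}^{s\G}$, meromorphic in $\Im z > -s$ with finite-rank poles, the Pollicott--Ruelle resonances, and the bound $\|(P-z)^{-1}\|_{\mathcal{H}^{s\G} \to \mathcal{H}^{s\G}} \le C\langle z\rangle^N$ for $\Im z > 1/2$, $\Re z > C$ is the usual large-parameter resolvent estimate for the self-adjoint operator $P$ (the weight being irrelevant away from the resonances). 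That $z = 0$ is a simple pole with residue the rank-one projection onto constants is because, by ergodicity (indeed mixing) of the geodesic flow on $S^*X$, the constants are the only resonant state there and there is no Jordan block; and finiteness of the resonance set in $\Im z > -1/2 + \epsilon$ is the spectral-gap and band-structure theory for contact Anosov flows \cite{fau.sjo.1}, the first resonance band being pinned to $\Im z = -1/2$ by the unit Lyapunov exponent --- for the hyperbolic surface $X$, the resonances above that line are in explicit correspondence with the eigenvalues of $\Delta_X$ below $1/4$, of which there are only finitely many.

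The main obstacle is the construction of $\G$ and the verification that $\Psg$ fits the hypotheses of the propagation and radial-point machinery, so that the Fredholm estimate holds on the whole half-plane $\Im z > -s$ and not merely for $\Im z$ large; this is the technical core of \cite{fau.sjo.1} and \cite{non.zwo.1}, which we import without change. Nothing in this part is special to our surface beyond the unit Lyapunov exponent of Lemma \ref{lemma.geodesic.is.anosov}, which normalises $\lambda_0 = 1$ and places the first resonance band on $\Im z = -1/2$, making $\Im z > -1/2 + \epsilon$ the natural strip in the statement.
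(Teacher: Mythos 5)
The paper does not actually prove this statement: it is imported verbatim as Theorem 4 of Nonnenmacher--Zworski \cite{non.zwo.1}, so there is no internal argument to compare against. Your outline is a correct and standard account of the Faure--Sj\"ostrand/Nonnenmacher--Zworski construction, with conventions that match the ones the paper uses later ($E_u^*$ a radial source and $E_s^*$ a radial sink for the lifted flow, the order function equal to $+s$ near $E_s^*$ and $-s$ near $E_u^*$ and monotone along the flow), and you rightly defer the technical core --- the escape function and the radial-point/propagation estimates --- to \cite{fau.sjo.1} and \cite{non.zwo.1}. Two small soft spots: invertibility of $\Psg - z$ for $\Im z$ large does not follow from relative compactness of $\Psg - P$ alone (one needs $\Im \Psg$ bounded above, via the sharp G\aa rding inequality, before analytic Fredholm theory yields meromorphy); and the polynomial bound $\|(P-z)^{-1}\|\le C\langle z\rangle^N$ in a strip reaching below the real axis is precisely the delicate high-frequency content of \cite[Theorem 4]{non.zwo.1}, not ``the usual large-parameter estimate for the self-adjoint operator $P$ with the weight irrelevant.''
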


We mention here that $\mathcal{G}$ is the Weyl quantisation of an escape function defined on $T^*(S^*X)$ constructed in \cite{non.zwo.1}. Consequently, $e^{-s\mathcal{G}}$ is a variable order pseudodifferential operator. The Hilbert spaces $\mathcal{H}^{sG}$ are similar to those appearing initially in Faure-Sj\"{o}strand, \cite{fau.sjo.1}. In particular, we can describe the Sobolev regularity at certain regions of fibre infinity for functions in the space $\mathcal{H}^{s\mathcal{G}}(S^*X)$.  Letting $A_1$ be a zeroth order pseudodifferential operator elliptic in an asymptotically conical neighbourhood of the line bundle $E_s^*$ over $S^*X$. We can say that there exists some $C>0$ such that
\[
C^{-1}\|A_1f\|_{H^s}\leq \|A_1f\|_{\mathcal{H}^{s\mathcal{G}}}\leq C \|A_1f\|_{H^s}
\]
where $H^s$ is the standard Sobolev space over $S^*X$ of order $s$. A similar equality of norms hold if $H^s$ is replaced by $H^{-s}$ and $A_1$ is instead elliptic of order zero in an asymptotically conic neighbourhood of $E_u^*$. We colloquially say that the spaces $\mathcal{H}^{s\mathcal{G}}$ are microlocally of order $H^s$ near $E_s^*$ and of order $H^{-s}$ near $E_u^*$.  In our proof below, where $\mathcal{G}$ is fixed, we will simply use $\mathcal{H}^s$ to refer to this scale of anisotropic Sobolev spaces.

We also mention that there is a relationship between the location of the poles of the resolvent of $P$ and the eigenvalues of the Laplacian for all compact hyperbolic manifolds as proved by Dyatlov-Faure-Guillarmou in \cite{dya.fau.gui.1}. Additionally, the resonant states of $P$ (the image of the residue of the resolvent at the poles) are precisely those distributions appearing in Corollary \ref{cor:resonance.distribution} for those $r_j$ corresponding to a Laplace eigenvalue (away from an exceptional set).

\section{Statement of the result}
Let $\Gamma \subset G$ be a discrete group such that $X = \Gamma \backslash \bb{D} = \Gamma \backslash G / K$ is a compact hyperbolic surface. We write $d\mu$ for the Riemannian measure associated to the hyperbolic metric on $X$, and write $L^2(X)$ for $L^2(X, d\mu)$. 

Let $a_1$ and $a_2$ be symbols in the H\"ormander class of type $(1, 0)$ and order $\alpha$ and $0$, respectively, where $\alpha \leq -6$. (More general symbols could be considered, and -6 is not sharp, but this is sufficient for our interests, since our applications will only require symbols of order $-\infty$.) We assume that they are both left-invariant by $\Gamma$, and have analytic continuations in $r$ to a strip of width strictly greater than $1/2$, so that their Zelditch quantizations $\Op(a_i)$ define operators $A_1^{\Gamma}$ and $A_2^{\Gamma}$ on $L^2(X)$. We also let $A_1^{\Gamma}(t)$ be a time-evolved family of such operators, satisfying these conditions uniformly in $t$, with $A_1(0) = A_1$. 
Connecting with the discussion in the first section, our long-term interest is in the family $A_1^{\Gamma}(t) = e^{-it\Delta_X/2} A_1^{\Gamma} e^{it\Delta_X/2}$, the `quantum evolution of $A_1^{\Gamma}$' in the Heisenberg picture, but here, motivated by the RHS of \eqref{eq:classicalflow}, we consider the family $\Op(a_1 \circ g^t)$ where $g^t$ is the geodesic flow on $S^* \HH^2$ (notice that these symbols are left-$\Gamma$-invariant for all $t$). Our main result is

\begin{theorem}\label{theorem.main}
Let $\Gamma \subset G$ be a discrete cocompact group and let $X=\Gamma\backslash \bb{D}$ be the compact hyperbolic surface induced by $\Gamma$.  Let $a_1$, $a_2$ be symbols in the H\"ormander class $S^{\alpha}_{1,0}(T^* X)$ of order $\alpha \leq -6$ and $0$ respectively, which are left-invariant by $\Gamma$, and, when considered as a symbol in the Zelditch calculus in variables $(g,r)\in \Gamma\backslash G \times \bb{R}^+$, have an analytic continuation in $r$ to a strip of width greater than $1/2$.  We further suppose that either $a_1(\cdot, r_j)$ or $a_2(\cdot, r_j)$ has integral zero over $S^*X$ for each $j$ where $\lambda_j=1/4+r_j^2$ is the $j$'th eigenvalue of the Laplace-Beltrami operator on $\Gamma\backslash\bb{D}$.  Let $A_1^{\Gamma}(t)$ and $A_2^{\Gamma}$ be the operators on $L^2(X)$ induced by $A_1(t)=\Op(a_1\circ g^t)$ and $A_2=\Op(a_2)$ using Zelditch quantization. Then $(A_2^{\Gamma})^* A_1^{\Gamma}(t)$ is trace class for each $t$, and 
\begin{equation}
\Trace (A_2^{\Gamma})^* A_1^{\Gamma}(t) 
\end{equation}
decays exponentially as $t \to \pm\infty$. Here $g^t$ is the classical geodesic flow on $S^*\HH^2$, or algebraically, it is the pullback by the right action of the subgroup $A$ on the group $G$.
\end{theorem}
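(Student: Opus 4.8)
The plan is to use the trace formula from Lemma~\ref{lemma.trace.formula}, which (in the notation of the excerpt) writes
\begin{equation*}
\Trace (A_2^{\Gamma})^* A_1^{\Gamma}(t) = \int (V\text{-independent form}) \;=\; \sum_j \int_{S^*X} (a_1 \circ g^t)(g, r_j)\, \overline{c_j(g)}\, dg,
\end{equation*}
where $c_j$ is built from $a_2$ and the resonant/boundary distributions $T_{\varphi_{r_j}}$ attached to the Laplace eigenvalue $1/4 + r_j^2$; concretely $\fT b$ is supported on the energy shells $r = r_j$ and on the shell $r_j$ reduces to pairing against (a sum over the eigenspace of) the distribution $e^{(-1/2 + ir_j)\ang{z,b}} T_{\varphi_{r_j}}(b) \in \mathcal{D}'(\Gamma\backslash G)$ of Corollary~\ref{cor:resonance.distribution}. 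Because $a_1$ has order $\alpha \le -6$ and both symbols continue analytically in $r$ to a strip of width $> 1/2$, the decay in $r_j$ (the eigenvalues obey Weyl growth $r_j \sim c\sqrt{j}$) together with polynomial bounds on $\|T_{\varphi_{r_j}}\|$ in suitable Sobolev norms guarantees the $j$-sum converges absolutely and uniformly in $t$, which also re-proves trace-class. So the whole problem reduces to showing that each term
\begin{equation*}
I_j(t) := \int_{S^*X} (a_1 \circ g^t)(g, r_j)\, \overline{c_j(g)}\, dg
\end{equation*}
decays exponentially, with a rate and constant controlled well enough in $j$ to survive the summation.

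The key step is to recognize $I_j(t)$ as a matrix coefficient of the geodesic-flow transfer operator acting between anisotropic Sobolev spaces. Write $\mathcal{L}^t f = f \circ g^t$ for the Koopman operator; then $I_j(t) = \langle \mathcal{L}^t a_1(\cdot, r_j), c_j \rangle_{L^2(S^*X)}$. I would fix a Faure--Sj\"ostrand/Nonnenmacher--Zworski anisotropic space $\mathcal{H}^s = \mathcal{H}^{s\mathcal{G}}(S^*X)$ with $s$ chosen in $(1/2, \,\text{strip width})$, and check two mapping properties: (i) $a_1(\cdot, r_j) \in \mathcal{H}^s$ (it is smooth, so this is automatic, with $\mathcal{H}^s$-norm polynomially bounded in $r_j$ via the analytic-continuation hypothesis and elliptic regularity in the Zelditch calculus), and (ii) the distribution $c_j$, which lies in the image of the residue of $(P - z)^{-1}$ at $z = 0$ up to smooth factors, pairs continuously against $\mathcal{H}^s$, i.e. $c_j \in (\mathcal{H}^s)' = \mathcal{H}^{-s}$, again with polynomial control in $r_j$ — this is exactly the content of the Dyatlov--Faure--Guillarmou correspondence \cite{dya.fau.gui.1} between Laplace eigenfunctions and Pollicott--Ruelle resonant states cited after Theorem~\ref{thm.non.zwo}. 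Now the decay of $\mathcal{L}^t$ on $\mathcal{H}^s$ is governed by the spectral gap in Theorem~\ref{thm.non.zwo}: the generator $P = -iH$ has, on $\mathcal{H}^s$, a simple resonance at $0$ (rank-one projection $\Pi_0$ onto constants) and all other resonances in $\Im z \le -\delta$ for some $\delta > 0$ (indeed $\delta$ can be taken close to $1/2$ once $s > 1/2$, away from the finitely many exceptional resonances). Hence by the resolvent bounds and a contour-shift argument, $\|\mathcal{L}^t - \Pi_0\|_{\mathcal{H}^s \to \mathcal{H}^s} \le C e^{-\delta t}$ for $t \to +\infty$ (and symmetrically, using the time-reversal $g^{-t}$ and the anisotropic space with roles of $E_s^*, E_u^*$ swapped, for $t \to -\infty$). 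Therefore
\begin{equation*}
I_j(t) = \langle \Pi_0 a_1(\cdot, r_j), c_j \rangle + O\!\big( e^{-\delta |t|}\, \|a_1(\cdot, r_j)\|_{\mathcal{H}^s}\, \|c_j\|_{\mathcal{H}^{-s}} \big).
\end{equation*}
The main point of the hypothesis is that the first term \emph{vanishes}: $\Pi_0 a_1(\cdot, r_j)$ is the constant equal to the average $\Vol(S^*X)^{-1}\int_{S^*X} a_1(\cdot, r_j)$, and either this average is zero (if it is $a_1$ that has integral zero on the shell $r_j$), or else $c_j$ — being the residue distribution, which annihilates constants precisely because $0$ is a simple resonance whose dual resonant state is the Liouville measure — pairs to zero with constants (this is where the $a_2$-has-integral-zero alternative enters, after tracking that $\langle 1, c_j\rangle$ is a multiple of $\int_{S^*X} a_2(\cdot, r_j)$). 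Either way the leading term drops and $I_j(t) = O(e^{-\delta|t|})$.

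Finally I would assemble: summing over $j$, $|\Trace (A_2^{\Gamma})^* A_1^{\Gamma}(t)| \le C e^{-\delta|t|} \sum_j \|a_1(\cdot,r_j)\|_{\mathcal{H}^s}\|c_j\|_{\mathcal{H}^{-s}}$, and the series converges by the order $\le -6$ decay in $r_j$ beating the polynomial-in-$r_j$ growth of the two norms and the Weyl counting of the $r_j$; the bound is uniform in $t$, and also covers the claimed trace-class property once $t$ is fixed. I expect the \textbf{main obstacle} to be the bookkeeping in the second and third paragraphs: pinning down exactly which object $c_j$ is (its normalization, and the precise multiple of $\int_{S^*X} a_2(\cdot, r_j)$ appearing in $\langle 1, c_j\rangle$) from Lemma~\ref{lemma.trace.formula}, and proving the polynomial-in-$r_j$ bounds on $\|a_1(\cdot,r_j)\|_{\mathcal{H}^s}$ and $\|c_j\|_{\mathcal{H}^{-s}}$ uniformly — the former needs the analytic-continuation-in-$r$ hypothesis converted into Zelditch-calculus elliptic estimates, the latter needs quantitative Sobolev bounds on Helgason boundary distributions of $L^2$-normalized eigenfunctions (or equivalently on Pollicott--Ruelle resonant states), uniformly over the eigenvalue. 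The exponential-decay mechanism itself is, by contrast, a direct and robust consequence of the spectral gap already recorded in Theorem~\ref{thm.non.zwo}.
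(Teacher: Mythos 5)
Your proposal follows essentially the same route as the paper: reduce the trace via the exact Zelditch/Helgason pairing to a sum over $j$ of correlations $\langle a_1(\cdot,r_j)\circ g^t, \fT_j a_2\rangle$, place $\fT_j a_2$ in an anisotropic space $\mathcal{H}^{-s}$ with polynomial control in $r_j$, invoke the Pollicott--Ruelle spectral gap to get exponential decay of each correlation (the mean-zero hypothesis killing the contribution of the simple resonance at $0$), and sum using the order $\leq -6$ decay against Weyl asymptotics. Two of your steps, however, conceal the actual work, and one of them is overstated as written. First, your claim $\|\mathcal{L}^t-\Pi_0\|_{\mathcal{H}^s\to\mathcal{H}^s}\leq Ce^{-\delta t}$ does not follow from the cited resolvent bounds: Theorem~\ref{thm.non.zwo} only gives \emph{polynomial} growth $\langle\lambda\rangle^N$ of the resolvent in the strip, and a contour shift with a polynomially growing integrand does not converge, let alone yield an operator-norm bound. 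The paper's Remark after Lemma~\ref{lemma.decay.of.cor} makes exactly this point: the uniform semigroup decay would require uniform resolvent bounds plus Gearhart--Pr\"uss (available in Faure--Tsujii's framework, but not claimed here). The fix, which is what Lemma~\ref{lemma.decay.of.cor} actually does, is to trade smoothness of $a_1(\cdot,r_j)$ for decay in $\lambda$ by writing $f_1=(P+i)^{-(N+2)}\overline{f_1}$ before applying Stone's formula, yielding a correlation bound with $\|f_1\|_{H^{N+4}}$ on the right rather than an operator bound.

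Second, your assertion that $c_j\in\mathcal{H}^{-s}$ ``is exactly the content of the Dyatlov--Faure--Guillarmou correspondence'' is not quite right: that correspondence places the resonant state $E_j$ in the anisotropic space, but $\fT_j a_2$ is the \emph{product} of $\overline{E_j}$ with the function $\int_K a_2(gk_\theta,r_j)E_j(gk_\theta)\,d\theta = A_2\varphi_j$, and multiplying a distribution of negative anisotropic order by a non-constant function requires an argument. The paper supplies this in Lemma~\ref{lem.FS} by exploiting the transport equations $X_+E_j=0$ and $(H+\tfrac12-ir_j)E_j=0$ to control the wavefront set of the product away from $E_s^*$, together with a microlocal partition $B=B_1X_+^2+B_2(H+\tfrac12-ir_j)^2+B_3$. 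Likewise the polynomial bound $\|\fT_j a_2\|_{\mathcal{H}^{-s}}\leq C\langle r_j\rangle^{7/2}$ rests on a specific input you do not identify: Otal's theorem that $T_j$ is the derivative of a H\"older-$1/2$ function with norm controlled by $\|\varphi_j\|_\infty+\|\nabla\varphi_j\|_\infty$, combined with H\"ormander's sup-norm bounds on eigenfunctions. You correctly flag these as the main obstacles, so the architecture of your argument is sound, but these are the points at which the proof is actually proved rather than cited.
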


We note that there are interesting examples of symbols which satisfy all the conditions required.  For example, a $\Gamma$-invariant symbol $a(z,b,r)=\varphi(z,b)f(r)$ where $\int_{\Gamma\backslash G} \varphi =0$ and $f$ is a smooth symbol in $r$ which decays on $\bb{R}^+$ at a polynomial rate with an analytic extension to a strip will satisfy all the conditions of Theorem \ref{theorem.main}.

\section{Proofs}

Using Theorem~\ref{thm.helgason.eigenfunction.representation.formula}, we can find an exact formula which relates the quantity $\langle A_1^{\Gamma}\varphi_{j}, A_2^{\Gamma}\varphi_{l}\rangle_{L^2(X)}$ to a certain bilinear form involving the Zelditch symbols $a_1$, $a_2$ of $A_1^{\Gamma}$ and $A_2^{\Gamma}$.

\begin{lemma}\label{lemma.trace.formula}
    Let $A_1^{\Gamma}$ and $A_2^{\Gamma}$ be two pseudodifferential operators on a compact hyperbolic surface $X = \Gamma \backslash \bb{D}$ as in Theorem~\ref{theorem.main}, with $\Gamma$-invariant Zelditch symbols $a_1$ and $a_2$ respectively.  Let $\varphi_{j}$, $\varphi_{k}$ be a pair of Laplace eigenfunctions on $\Gamma\backslash\bb{D}$ with eigenvalues $1/4+r_j^2$ and $1/4+r_k^2$ respectively. We take $r_j, r_k$ to be in $[0, \infty) \cup i [-1/2, 0]$. Then
\begin{equation}
    \langle A_1^{\Gamma} \varphi_{j}, A_2^{\Gamma} \varphi_{l}\rangle_{L^2(X)}=\int_{\Gamma\backslash G} a_1(g,r_j)E_j(g) \left(\int_K \overline{a_2(gk_{\theta},r_l)E_l(gk_{\theta})}  d\theta \right) dg ,
\label{eq:ip1} \end{equation}
    where $E_j(g)$ is the $\Gamma$-invariant distribution on $G$ given by
\begin{equation}
E_j(g) =  e^{(-1/2 + i r_j)\ang{z, b}}   T_j(b). %\quad T_j(db) = T_j(b) db.
\label{eq:Edefn}\end{equation}
Here $E_j$ is the $\Gamma$-invariant distribution mentioned in Corollary \ref{cor:resonance.distribution}.  We use coordinates $(z, b)$ on $G$ as described above, and $T_j(b)$ is the boundary distribution function for $\varphi_j$, making the choice of $r_j$ as above. 
%We have written $T_j(db) = T_j(b) db$, thus $T_j(b)$ is a (distributional) function, while $T_j(db)$ transforms as a density (that is, it transforms as a 1-form, up to sign, under coordinate changes). 
We also remind  the reader that the Haar measure $dg$ is $e^{\ang{z,b}} d\Vol(z) db$ (see \eqref{eq:haar}), accounting for $-1/2$ (instead of $+1/2$) in the exponent    in \eqref{eq:Edefn}. We can interpret $E_j$ as the $j$th Ruelle-Pollicott resonance on $\Gamma \backslash G$ in the first band; see \cite[Section 2]{dya.fau.gui.1}. 
\end{lemma}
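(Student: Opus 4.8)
The plan is to compute the inner product $\langle A_1^\Gamma \varphi_j, A_2^\Gamma \varphi_l\rangle_{L^2(X)}$ by unfolding to $\bb{D}$ and using the defining property of the Zelditch symbol on plane waves. First I would use Helgason's Theorem \ref{thm.helgason.eigenfunction.representation.formula} to write the eigenfunction $\varphi_j$ (lifted to $\bb{D}$) as $\varphi_j(z) = \int_B e^{(\frac12 + ir_j)\ang{z,b}} T_j(b)\, db$, and similarly for $\varphi_l$. Applying the Zelditch operator $A_1$ and using \eqref{eq:symbol}, which says $A_1 e^{(\frac12+ir)\ang{z,b}} = a_1(z,b,r) e^{(\frac12+ir)\ang{z,b}}$, we get $A_1 \varphi_j(z) = \int_B a_1(z,b,r_j)\, e^{(\frac12+ir_j)\ang{z,b}} T_j(b)\, db$, and likewise $A_2\varphi_l(z) = \int_B a_2(z,b',r_l)\, e^{(\frac12+ir_l)\ang{z,b'}} T_l(b')\, db'$. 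Since $A_1$ and $A_2$ are $\Gamma$-invariant, these lifts descend to $X$; the key point is that the resulting $L^2(X)$ pairing can be computed by integrating over a fundamental domain, but it is more natural to convert this to an integral over $\Gamma\backslash G$ using the boundary-distribution representation and the $\Gamma$-equivariance of $T_j$ from Corollary \ref{cor:resonance.distribution}.

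Next I would assemble the inner product. Writing $\langle A_1^\Gamma\varphi_j, A_2^\Gamma\varphi_l\rangle_{L^2(X)} = \int_X A_1\varphi_j(z)\, \overline{A_2\varphi_l(z)}\, d\mu(z)$ and substituting the two boundary integrals, one gets a double integral over $B\times B$ against $T_j(b)\overline{T_l(b')}$. The crucial manipulation is to recognize the combination $e^{(\frac12 + ir_j)\ang{z,b}} = e^{\ang{z,b}} \cdot e^{(-\frac12 + ir_j)\ang{z,b}}$, so that one factor of $e^{\ang{z,b}}$ can be absorbed (together with $d\Vol(z)$) into the Haar measure $dg = e^{\ang{z,b}} d\Vol(z)\, db$ via \eqref{eq:haar}. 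Defining $E_j(g) = e^{(-\frac12+ir_j)\ang{z,b}} T_j(b)$ as in \eqref{eq:Edefn}, which by Corollary \ref{cor:resonance.distribution} is a genuine $\Gamma$-invariant distribution on $G$, the $b$-integral against $T_j$ becomes an integral over the $K$-fibre (equivalently over $B$, using $d\theta = e^{\ang{z,b}} db$) of $a_1(g,r_j) E_j(g)$. The fibre over a fixed base point $z$ in $S^*X$ is parametrized by $\theta$, i.e.\ by $g \mapsto gk_\theta$; this is exactly why the inner $K$-integral $\int_K \overline{a_2(gk_\theta, r_l) E_l(gk_\theta)}\, d\theta$ appears in \eqref{eq:ip1}, while the outer integral runs over $\Gamma\backslash G$.

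The step I expect to require the most care is making the unfolding rigorous: $T_j$ and $T_l$ are only distributions on $B$, not functions, so the double integral over $B\times B$ and the interchange with the $z$-integration over the (noncompact) disk need justification. This is where the hypotheses in Theorem \ref{theorem.main} enter — $a_1$ of order $\alpha\le -6$ with analytic continuation in $r$ to a strip of width $>1/2$ guarantees enough decay of the Schwartz kernels (cf.\ Eguchi–Kowata \cite{eguchi.kowata}) that the series \eqref{eq:kernel.on.quotient} defining the operators on $X$ converges absolutely, and that the pairing against the boundary distributions is well-defined and the Fubini interchange is legitimate. Concretely, I would first verify the identity with $T_j, T_l$ replaced by smooth densities (where everything is an absolutely convergent integral), obtain the clean formula \eqref{eq:ip1}, and then pass to the distributional case by a density/continuity argument in $\mathcal{D}'(B)$, using that both sides depend continuously on $(T_j, T_l)$ in the relevant topology thanks to the smoothness and decay of $a_1, a_2$ in all variables. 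Once \eqref{eq:ip1} is established, the identification of $E_j$ with the first-band Ruelle–Pollicott resonance on $\Gamma\backslash G$ is immediate from \cite[Section 2]{dya.fau.gui.1} and Corollary \ref{cor:resonance.distribution}.
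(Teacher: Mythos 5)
Your proposal follows essentially the same route as the paper's proof: Helgason's boundary-distribution representation, the Zelditch symbol identity \eqref{eq:symbol} applied under the integral, and the change of variables $(z,b)\mapsto g$, $(z,b')\mapsto gk_\theta$ with $db = e^{\ang{z,b}}d\theta$ to absorb the Poisson-kernel factor into the Haar measure. The only substantive difference is that you flag and sketch the justification of the distributional/Fubini steps (and the paper instead notes explicitly that \eqref{eq:symbol} extends to complex $r_j\in i[-1/2,0]$ by analytic continuation of both sides), but the argument is the same.
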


\begin{proof}
    We begin with Helgason's theorem, Theorem \ref{thm.helgason.eigenfunction.representation.formula}, on the integral representation for Laplace eigenfunctions. Since $\varphi_{j},\varphi_{l}$ are Laplace eigenfunctions on $\Gamma\backslash\bb{D}$ with eigenvalues $1/4+r_j^2$ and $1/4+r_l^2$ respectively, we can consider them as left $\Gamma$-invariant (or left $\Gamma$-periodic) functions on $\bb{D}$, satisfying $\Delta_{\bb{D}}\varphi_{j}=(1/4+r_j^2)\varphi_{j}$ and $\Delta_{\bb{D}}\varphi_{l}=(1/4+r_l^2)\varphi_{l}$ respectively. Since $\Gamma\backslash\bb{D}$ is compact, they are bounded functions on $\bb{D}$ and hence trivially satisfy the exponential growth bound of Helgason's theorem \ref{thm.helgason.eigenfunction.representation.formula}. Our choice of $r_j, r_l$ ensures that $1/2+ir_j$ and $1/2+ir_l$ don't belong to the exceptional set $0, -1, -2, \ldots$, so we have unique boundary distributions $T_{\varphi_{j}}(b)$ and $T_{\varphi_{l}}(b)$ such that
    \[
    \varphi_{j}(z) = \int_B e^{(\frac{1}{2}+ir_j)\langle z, b\rangle} T_{\varphi_{j}}(b)db, \qquad \varphi_{l}(z) = \int_B e^{(\frac{1}{2}+ir_l)\langle z, b\rangle} T_{\varphi_{l}}(b)db
    \]
    for all $z\in\bb{D}$.
    We then recall that the pseudodifferential operators $A_1=\Op(a_1)$ and $A_2=\Op(a_2)$ defined by Zelditch quantisation act on the eigenfunctions in the following way:
    \begin{equation*}
        \begin{split}
            A_1\varphi_{j}(z)=\int_{B}a_1(z,b,r_j)e^{(\frac{1}{2}+ir_j)\langle z, b\rangle} T_{\varphi_{j}}(b)db\\
            A_2\varphi_{l}(z)=\int_{B}a_2(z,b,r_l)e^{(\frac{1}{2}+ir_l)\langle z, b\rangle} T_{\varphi_{l}}(b)db.
        \end{split}
    \end{equation*}
    In fact, for real $r_j$, $r_l$ this follows directly from \eqref{eq:symbol}. On the other hand, both sides of \eqref{eq:symbol} have an analytic continuation to the strip of radius $1/2$, so the equality persists for $|\Im r| \leq 1/2$. Therefore,  we can express $\langle A_1\varphi_{j}, A_2\varphi_{l}\rangle_{L^2(\Gamma\backslash\bb{D}, dz)}$, for any $j$ and $l$, as
\begin{equation}
    \int_{\Gamma\backslash\bb{D}} \left(\int_{B}a_1(z,b,r_j)e^{(\frac{1}{2}+ir_j)\langle z, b\rangle} T_{\varphi_{j}}(b) \, db \right)  \left( \int_{B}\overline{a_2(z,b',r_l)e^{(\frac{1}{2}+ir_l)\langle z, b'\rangle} T_{\varphi_{l}}(b')} \, db' \right) d\Vol(z).
\label{eq:ip2}\end{equation}
 Now writing $g = (z, b)$, we have $(z, b') = g k_\theta$ for some $\theta = \theta(z, b, b')$, since the right action of $K$ rotates in the fibres of the cosphere bundle $S^* \HH^2$ keeping the basepoint $z$ fixed. In view of equality \eqref{eq:haar}, $dg = e^{\ang{z, b}} d\Vol(z) db = d\Vol(z) dz d\theta$, we have $db = e^{\ang{z, b}} d\theta$ when $z$ is held fixed.    We can therefore express \eqref{eq:ip2} as 
 \begin{equation}
    \int_{\Gamma\backslash G} a_1(g,r_j) E_j(g)   \left( \int_{K}\overline{a_2(g k_\theta,r_l) E_l(g k_\theta)} d\theta \right) dg, 
\label{eq:ip3}\end{equation}
which verifies \eqref{eq:ip1}. 
\end{proof}

\begin{remark}
    We remark that when $A_2$ is the identity operator, $\langle A_1\varphi_{j}, A_2\varphi_{l}\rangle=\langle A_1\varphi_j,\varphi_{l}\rangle$ equals the off-diagonal Wigner distributions $W^{\Gamma}_{j, l}(a_1)$ considered by Anantharaman and Zelditch in \cite{ana.zel.2}, with the diagonal Wigner distributions (with $j=l$) considered previously in \cite{ana.zel.1}. There, they relate these distributions to (families of) eigendistributions of the geodesic flow, called Patterson-Sullivan distributions. They derive an identity involving an intertwining operator which sends Patterson-Sullivan distributions to Wigner distributions. Their derivation of this operator involves expressing $k_{\theta}$ in the right hand side of \eqref{eq:ip3} as a product $\overline{\mathbf{n}}_v\mathbf{a}_t\mathbf{n}_u$ for functions $v,t,u$ of $\theta$ and then using the invariance and eigenvariance properties of the distributions $E_l$ by pullback with respect to the right action of $N$ and $A$ on $G$ respectively. See the second proof of Proposition 5.7 in \cite{ana.zel.2}.
\end{remark}
We now verify the claim \eqref{eq:traceform} in the introduction. 
\begin{corollary}\label{cor.pairing}
Let $w\geq 1/2$ and let $S_{w}$ be the strip $\{ r \in \CC \mid |\Im r| \leq w \}$ in the complex plane of width $w$. 
Let $a_1, a_2$ be symbols as in Theorem~\ref{theorem.main}. Then the trace $A_2^* A_1$ is given by the sesquilinear pairing
\begin{equation}
\ang{a_1,\fT a_2}
\label{eq:pairing}\end{equation}
 of $a_1$ against a distribution $\fT a_2$ where $\fT$ maps symbols to distributions on $\Gamma \backslash G \times S_w$, given by 
\begin{equation}
\fT a_2 (g, r) = \sum_{j} \fT_j a_2(g) \delta_{r_j}(r) , \quad  \fT_j a_2(g) = \overline{E_j(g)} \int_K a_2(g k_\theta, r_l) E_l(g k_\theta) \, d\theta.
\label{eq:fTdefn}\end{equation}
The pairing $\ang{a_1, \fT a_2}$ is well-defined for $a_1 \in S^{-4}_{1, 0}((\Gamma \backslash G \times S_{w}))$. 
\end{corollary}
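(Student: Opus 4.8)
The plan is to compute the trace by expanding in an orthonormal basis $\{\varphi_j\}_{j\ge0}$ of $L^2(X)$ consisting of Laplace eigenfunctions, $\Delta_X\varphi_j=(1/4+r_j^2)\varphi_j$ with $r_j\in[0,\infty)\cup i[-1/2,0]$ as in Lemma~\ref{lemma.trace.formula} (for the bottom eigenvalue this gives $r_0=-i/2$, so $1/2+ir_0=1$ stays off the exceptional set of Theorem~\ref{thm.helgason.eigenfunction.representation.formula}, and Lemma~\ref{lemma.trace.formula} applies for every $j$), and then to feed each diagonal matrix element into Lemma~\ref{lemma.trace.formula}. Since $a_1,a_2$ are analytic in $r$ on a strip, the operators $A_1^{\Gamma}=\Op(a_1)$ and $A_2^{\Gamma}=\Op(a_2)$ are, under the identification recalled above, honest pseudodifferential operators on $X$ of orders $\alpha\le-6$ and $0$; hence $(A_2^{\Gamma})^*A_1^{\Gamma}$ has order $\alpha<-2=-\dim X$ and is trace class, and
\[
\Trace (A_2^{\Gamma})^*A_1^{\Gamma}=\sum_j\big\langle A_1^{\Gamma}\varphi_j,\,A_2^{\Gamma}\varphi_j\big\rangle_{L^2(X)}
\]
is an absolutely convergent series, independent of the basis.

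By Lemma~\ref{lemma.trace.formula} applied with $l=j$, each summand equals
\[
\int_{\Gamma\backslash G}a_1(g,r_j)\,E_j(g)\,\overline{\Big(\int_K a_2(gk_\theta,r_j)\,E_j(gk_\theta)\,d\theta\Big)}\,dg=\int_{\Gamma\backslash G}a_1(g,r_j)\,\overline{\fT_j a_2(g)}\,dg ,
\]
with $\fT_ja_2$ the distribution in \eqref{eq:fTdefn}. Summing over $j$ and recording the result as an integral against $\sum_j\delta_{r_j}(r)$ in the $r$ variable gives $\Trace(A_2^{\Gamma})^*A_1^{\Gamma}=\int_{\Gamma\backslash G\times S_w}a_1(g,r)\,\overline{\fT a_2(g,r)}\,dg\,dr=\ang{a_1,\fT a_2}$, which is linear in $a_1$ and antilinear in $a_2$, i.e.\ sesquilinear; this is the first assertion. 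One should record why $\fT_ja_2$ is a genuine distribution: $\overline{E_j}$ is singular only conormally to the weak-stable foliation (its singularities come from the circle distribution $T_j$), whereas the $K$-average $\int_K a_2(\cdot\,k_\theta,r_j)E_j(\cdot\,k_\theta)\,d\theta$ is $K$-invariant, hence the pullback of a distribution on $X$, so is singular only conormally to the fibres of $S^*X\to X$; these two conormal sets meet only in the zero section, so the product defining $\fT_ja_2$ is well defined, and then $\fT a_2=\sum_j\fT_ja_2\,\delta_{r_j}\in\mathcal{D}'(\Gamma\backslash G\times S_w)$ because any test function compactly supported in the infinite strip $S_w$ meets only finitely many $r_j$.

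For the last statement: Lemma~\ref{lemma.trace.formula} and its proof use only that $a_1$ is a symbol analytic in $r$ on a strip of width $\ge1/2$ (plus enough decay for the integrals to converge), so they apply equally when $a_1\in S^{-4}_{1,0}(\Gamma\backslash G\times S_w)$; then $(A_2^{\Gamma})^*\Op(a_1)$ has order $-4<-\dim X$, so the computation above is valid and identifies the $j$th term of $\ang{a_1,\fT a_2}$ with $\langle\Op(a_1)\varphi_j,A_2^{\Gamma}\varphi_j\rangle_{L^2(X)}$. The latter is bounded by $\|\Op(a_1)\varphi_j\|_{L^2}\,\|A_2^{\Gamma}\varphi_j\|_{L^2}\lesssim(1+|r_j|)^{-4}$, using the mapping property $\Op(a_1)\colon H^{-4}\to L^2$, boundedness of $A_2^{\Gamma}$ on $L^2$, and $\|\varphi_j\|_{H^{-4}}\asymp(1+|r_j|)^{-4}$; since the Weyl law gives $\#\{j:|r_j|\le R\}=O(R^2)$, the series $\sum_j(1+|r_j|)^{-4}$ converges, so the pairing is absolutely convergent. (In fact any order $<-2$ suffices here; $-4$, a fortiori $-6$, leaves ample room.)

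The only non-routine ingredient is the wavefront-set bookkeeping making $\fT_ja_2$ a bona fide distribution --- but this is exactly what is already carried out in Lemma~\ref{lemma.trace.formula} (compare the second proof of Proposition~5.7 in \cite{ana.zel.2}); everything else is the spectral theorem, the standard trace-class criterion, and the Weyl law. I therefore expect no real obstacle in Corollary~\ref{cor.pairing}; the genuine difficulty is deferred to Theorem~\ref{theorem.main}, where $a_1$ is replaced by $a_1\circ g^t$ and one must extract exponential decay of $\ang{a_1\circ g^t,\fT a_2}$ as $t\to\pm\infty$, uniformly across the infinitely many energy shells $r_j$.
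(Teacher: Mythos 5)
Your proposal is correct and follows the same skeleton as the paper's argument: expand the trace over the orthonormal eigenbasis, apply Lemma~\ref{lemma.trace.formula} with $l=j$ to each diagonal matrix element, and sum the result against $\sum_j \delta_{r_j}$. Where you genuinely diverge is in the convergence/well-definedness step. The paper estimates the distributions $\fT_j a_2$ directly: it invokes Otal's theorem that $T_j$ is the derivative of a H\"older-$1/2$ function whose norm is controlled by $\| \varphi_j \|_\infty + \| \nabla \varphi_j \|_\infty \lesssim \ang{r_j}^{3/2}$, deduces $\| \fT_j a_2 \|_{H^{-1}(\Gamma\backslash G)} \lesssim \ang{r_j}^{3/2}$, and pairs this against $a_1(\cdot,r_j)$, whose Sobolev norms are $O(\ang{r_j}^{-4})$, giving $O(\ang{r_j}^{-5/2})$ per term. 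You instead bound the matrix element operator-theoretically, $|\langle \Op(a_1)\varphi_j, A_2^{\Gamma}\varphi_j\rangle| \leq \| \Op(a_1)\varphi_j \|_{L^2}\, \| A_2^{\Gamma}\varphi_j \|_{L^2} \lesssim \ang{r_j}^{-4}$, via the mapping property $\Op(a_1):H^{-4}\to L^2$ and $\| \varphi_j \|_{H^{-4}} \asymp \ang{r_j}^{-4}$; this is cleaner and even yields a better per-term bound. Both arguments close with the Weyl law. The trade-off is that the paper's route produces the quantitative distributional estimate \eqref{eq:H-1}, which is not merely a device for this corollary but the input to Lemma~\ref{lem.FS} and to the summation over $j$ in the proof of Theorem~\ref{theorem.main}; your route defers that work to later. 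Two minor remarks: your wavefront-set bookkeeping for the product defining $\fT_j a_2$ is more elaborate than needed, since the $K$-average equals $A_2^{\Gamma}\varphi_j(z)$, a smooth ($K$-invariant) function, so its product with the distribution $\overline{E_j}$ is automatically defined (the paper only records that it is $L^2$ in $z$ and constant in $b$, which suffices for the $H^{-1}$ estimate); and your check that the bottom eigenvalue $r_0=-i/2$ avoids the exceptional set of Theorem~\ref{thm.helgason.eigenfunction.representation.formula} is a detail the paper leaves implicit but worth having.
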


\begin{proof}
The formula follows immediately from Lemma~\ref{lemma.trace.formula} and the standard formula for the trace
\begin{equation}
\Trace A_2^* A_1 = \sum_{j=0}^\infty \langle A_1 \varphi_j, A_2 \varphi_j \rangle_{L^2(X)}
\end{equation}
using the fact that the $\varphi_j$ are an orthonormal basis for $L^2(X)$. 
We need to check that the sum is well-defined as a distribution. This follows from the estimates below which verify that the pairing is well-defined for $a_1 \in S^{-4}_{1, 0}((\Gamma \backslash G \times S_{w}))$. We use rather crude estimates for this, which accounts for the loss in the order of $a_1$. We are unconcerned by this as our intended application is to operators of order $-\infty$, as in \eqref{eq:qc}. 

Using work of Otal, \cite{Otal}, we find that $T_j$ is the derivative of a function $F_j(b)$ that is H\"older continuous of order $1/2$. The H\"older $1/2$-norm of $F$ is bounded in \cite{Otal} by $C \big( \| \varphi_j \|_\infty + \| \nabla \varphi_j \|_\infty \big)$, where $C$ is independent of $j$. Using standard estimates of eigenfunctions on compact manifolds, for example as proved in \cite{hor.spectral.fn}, gives us an estimate of $C \ang{r_j}^{3/2}$ for the H\"older $1/2$-norm of $F$ and \emph{a fortiori} for its $L^2$ norm. Therefore $T_j$ itself is in $H^{-1}(S^1)$ with a norm estimate $C \ang{r_j}^{3/2}$, with $C$ uniform in $j$. It follows that $E_j$, as a function of $(z, b)$, coordinates which makes sense locally on $\Gamma \backslash G$, is $L^\infty$ in $z$ with values in $H^{-1}(S^1_b)$, with a norm estimate $C \ang{r_j}^{3/2}$.

We now consider the integral over $K$ in the definition \eqref{eq:fTdefn} of $\fT a_2$. Recall that this is an expression for $A_2 \varphi_j$. Since $A_2$ is a pseudodifferential operator of order $0$, this is $L^2(X)$ uniformly in $r$ (with constant depending on a finite number of seminorms of the symbol $a_2$, but not on $j$. 
Viewed as a function on $\Gamma \backslash G$, it is uniformly $L^2$ in $z$, and constant in $b$. The product in \eqref{eq:fTdefn} is therefore $L^2$ in $z$ with values in $H^{-1}$ in $b$, locally, and therefore $H^{-1}$ locally, satisfying 
\begin{equation}
\| \fT_j a_2 \|_{H^{-1}(\Gamma \backslash G)} \leq C \ang{r_j}^{3/2}, \quad \text{with $C$ independent in $j$}.  
\label{eq:H-1}\end{equation}

As for $a_1$, as a symbol of order $-4$ it is in $H^{k}(\Gamma \backslash G)$ for each fixed $r$, with norm bounded by $\ang{r}^{-4}$. The pairing of $a_1$ with the $j$th summand of \eqref{eq:fTdefn} is therefore $O(\ang{r_j}^{-5/2})$, and using Weyl asymptotics, we see that this is summable in $j$. This verifies the claim that the pairing $\ang{\fT a_2, a_1}$ is well-defined for symbols $a_1$ of order $-4$. 

\end{proof}

We can interpret \eqref{eq:pairing}, at least formally, as a sum over $j$ of the classical correlation of the function $a_1(\cdot, r_j)$, with the distribution $\fT_j a_2$. 
Next we check that $\fT_j a_2$ has the required regularity for which we can apply some results on decay of correlations. 

\begin{lemma}\label{lem.FS} The distribution $\fT_j a_2$ in \eqref{eq:fTdefn} is in the anisotropic Sobolev space $\mathcal{H}^{-s}(\Gamma\backslash G)$ described in Section \ref{subsec:aniso} for every $s \geq 1$, and the norm of $\fT_j a_2$ in the space $\mathcal{H}^{-s}(\Gamma\backslash G)$ is bounded by $C \ang{r_j}^{7/2}$. 
\end{lemma}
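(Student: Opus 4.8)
The plan is to reduce the statement to the microlocal regularity of the resonant distribution $E_j$ together with the (controlled) cost of multiplying by a smooth function. As in the proof of Lemma~\ref{lemma.trace.formula}, the inner $K$-integral in \eqref{eq:fTdefn} is exactly $A_2\varphi_j$ regarded as a function on $X$; lifting it $K$-invariantly to $\Gamma\backslash G$ and calling it $\Phi_j$, we have $\fT_j a_2=\overline{E_j}\,\Phi_j$. Since $A_2$ has order $0$ and $\Delta_X\varphi_j=(\tfrac14+r_j^2)\varphi_j$, elliptic estimates on the compact surface $X$ give $\Phi_j\in C^\infty$ with $\|\Phi_j\|_{C^m(\Gamma\backslash G)}\lesssim\ang{r_j}^{m+O(1)}$, the implied constants depending only on $m$ and on finitely many seminorms of $a_2$.

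Next I would isolate three \emph{exact} facts about $E_j(g)=e^{(-1/2+ir_j)\ang{z,b}}T_j(b)$. (i) $\mathrm{WF}(E_j)\subseteq E_s^*$: $E_j$ is a smooth multiple of the pullback of the $\mathbb{S}^1$-distribution $T_j$ along the submersion $g\mapsto b$, and $E_s^*=(E_s\oplus E_0)^0$ is precisely the conormal to the level sets $\{b=\mathrm{const}\}$, the line bundle spanned by $db$. (ii) $E_j$ solves the two \emph{lower-order-free} equations $X_+E_j=0$ and $HE_j=(-\tfrac12+ir_j)E_j$: the first because the stable horocycle flow fixes both $b$ and the Busemann level sets, the second because the geodesic flow fixes $b$ and increases $\ang{z,b}$ at unit rate. (iii) $\|E_j\|_{H^{-1}(\Gamma\backslash G)}\lesssim\ang{r_j}^{3/2}$, which is the bound from Otal's work already used in the proof of Corollary~\ref{cor.pairing}; in fact $\overline{E_j}$ lies in $L^2_z H^{-1}_b$ with norm $\lesssim\ang{r_j}^{3/2}$. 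The same facts hold for $\overline{E_j}$ with $r_j$ replaced by $-r_j$.

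I would then estimate $\|\overline{E_j}\|_{\mathcal{H}^{-s\mathcal{G}}}$ with a microlocal partition of unity on fibre infinity of $T^*(\Gamma\backslash G)$ subordinate to the cover by a small conic neighbourhood of $E_s^*$, by $\{u\neq0\}$ (which contains $E_u^*$), and by $\{h\neq0\}$ (which contains $E_0^*$). Near $E_s^*$ the space $\mathcal{H}^{-s\mathcal{G}}$ is microlocally $H^{-s}$, and since $s\ge1$ the bound (iii) more than suffices. On $\{u\neq0\}$, $X_+$ is elliptic, and applying elliptic regularity to $X_+\overline{E_j}=0$ — which carries \emph{no} $\ang{r_j}$-dependent loss because $X_+$ has no zeroth order term — puts $\overline{E_j}$ in every microlocal Sobolev order there with norm $\lesssim\ang{r_j}^{3/2}$, uniformly in the order; in particular this covers a neighbourhood of $E_u^*$, where $\mathcal{H}^{-s\mathcal{G}}$ demands $H^{+s}$. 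On $\{h\neq0\}$, away from $E_s^*$ and $E_u^*$, the space $\mathcal{H}^{-s\mathcal{G}}$ is only $L^2$ microlocally, and since a zeroth order operator microsupported away from $E_s^*$ maps $L^2_z H^{-1}_b$ into $L^2$, this region contributes only $\ang{r_j}^{3/2}$. Assembling, $\|\overline{E_j}\|_{\mathcal{H}^{-s\mathcal{G}}}\lesssim\ang{r_j}^{3/2}$ with the constant depending on $s$. Finally, multiplication by the fixed smooth function $\Phi_j$ is bounded on each $\mathcal{H}^{-s\mathcal{G}}$ with operator norm controlled by $\langle s\rangle$ and finitely many $C^m$-seminorms of $\Phi_j$ — since $e^{-s\mathcal{G}}[M_{\Phi_j},e^{s\mathcal{G}}]$ is of negative order, of size $O(s)\,\|\Phi_j\|_{C^{O(1)}}$ — so feeding in the seminorm bounds from the first paragraph yields $\fT_j a_2=\overline{E_j}\Phi_j\in\mathcal{H}^{-s\mathcal{G}}$ for every $s\ge1$, with norm bounded by $C(s)\ang{r_j}^{7/2}$ (the $3/2$ from Otal, the remaining $2$ from the smooth factor).

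The main obstacle is exactly this uniform-in-$s$ bookkeeping of the $\ang{r_j}$-dependence away from $E_s^*$. The tempting region to worry about is the flow direction $E_0^*$, where the transport equation $HE_j=(-\tfrac12+ir_j)E_j$ forces $E_j$ to carry frequency content at scale $\sim\ang{r_j}$, so a direct elliptic estimate for $H-(-\tfrac12+ir_j)$ would lose powers of $\ang{r_j}$ growing with $s$; the two things that make the argument go through are that near $E_0^*$ the anisotropic space only asks for $L^2$, not $H^s$, and that the one genuine singularity of $\overline{E_j}$ sits at $E_s^*$, so the high-order microlocal norms near $E_u^*$ are controlled — through the exact equation $X_+\overline{E_j}=0$ rather than any delicate radial-source estimate at the source $E_u^*$ — by the single global $H^{-1}$-bound (iii).
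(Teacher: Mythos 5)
Your overall strategy is the paper's, repackaged: both proofs rest on the same three facts (the $H^{-1}$ bound $\lesssim\ang{r_j}^{3/2}$ coming from Otal, the exact equations $X_+E_j=0$ and $(H+\tfrac12-ir_j)E_j=0$, and a microlocal decomposition isolating $E_s^*$, where neither $X_+$ nor $H$ is elliptic). The difference is one of packaging: the paper applies the (squared) vector fields directly to the product $\fT_j a_2$ and splits the variable-order operator $B$ as $B_1X_+^2+B_2(H+\tfrac12-ir_j)^2+B_3$, whereas you first estimate $\overline{E_j}$ alone in $\H^{-s}$ and then invoke boundedness of multiplication by the smooth factor $A_2\varphi_j$. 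Your observation that elliptic regularity for $X_+$ near $E_u^*$ costs no powers of $\ang{r_j}$ is correct and is the same mechanism as the paper's $B_1X_+^2$ term.

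However, two steps in your treatment of the region near $E_0^*$ have genuine gaps. First, the assertion that a zeroth-order operator microsupported away from $E_s^*$ maps $L^2_zH^{-1}_b$ into $L^2$ is false: to trade the negative order in $b$ against ellipticity in the complementary codirections you must pay a derivative in $z$ (take $u=g(z)h(b)$ and compute on the Fourier side; one needs $g\in H^1_z$). For $E_j=e^{(-1/2+ir_j)\ang{z,b}}T_j(b)$ each $z$-derivative of the exponential costs a factor of $\ang{r_j}$, so this region contributes $\ang{r_j}^{5/2}$ rather than $\ang{r_j}^{3/2}$; your intermediate claim $\|\overline{E_j}\|_{\H^{-s}}\lesssim\ang{r_j}^{3/2}$ is therefore not established (the final exponent $7/2$ can likely be rescued, but the bookkeeping must be redone). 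Second, you assume the escape function has order exactly $0$ near $E_0^*$; the construction recalled in Section~\ref{subsec:aniso} only guarantees that the order lies in $[-s,s]$ and is monotone along the flow, so this is an extra property of $\G$ you would need to arrange or verify --- the paper avoids the issue by using the elliptic equation $(H+\tfrac12-ir_j)^2\fT_j a_2=E_j\,(H+\tfrac12-ir_j)^2(A_2\varphi_j)$ in that region, which is precisely where its loss of $\ang{r_j}^{2}$ originates. Finally, the operator norm of multiplication by $A_2\varphi_j$ on $\H^{-s}$ requires on the order of $s$ derivatives of the factor, each costing $\ang{r_j}$, so your claimed bound $C\ang{r_j}^{7/2}$ uniformly over all $s\geq 1$ does not follow from the argument for large $s$ (only $1\leq s\leq 2$ is used downstream, and the paper's own proof is likewise written only for $s=1$).
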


\begin{proof}
We have already seen, in the proof of Corollary~\ref{cor.pairing}, that $\fT_j a_2$ is in the space $H^{-1}(\Gamma\backslash G)$ with norm bounded by $C \ang{r_j}^{3/2}$. To obtain the strengthened conclusion that  $\fT_j a_2$ in the space $\mathcal{H}^{-s}(\Gamma\backslash G)$, we use the fact that $E_j$ satisfies equations 

\begin{equation}\label{eqn.X+.and.H.smoothness.of.distribution}
    \begin{split}
        X_+ E_j &=0\\
        (H+\frac{1}{2}-ir_j)E_j&=0.
    \end{split}
    \end{equation}
   It follows that $\fT_j a_2 = (A_2 \varphi_j) E_j$ satisfies the equations 
  \begin{equation}\label{eqn.X+.and.H.smoothness.2}
    \begin{split}
        X_+^2 \fT_j a_2 &= E_j (X_+^2 (A_2 \varphi_j)) \\
        (H+\frac{1}{2}-ir_j)^2 \fT_j a_2 &= E_j ((H+\frac{1}{2}-ir_j)^2 (A_2 \varphi_j)).
    \end{split}
    \end{equation}

     To compute the norm of $\fT_j a_2$ in $\mathcal{H}^{-1}(\Gamma\backslash G)$, we choose an elliptic pseudodifferential operator $B$  of variable order $\mathsf{s}$, equal to the variable order of the Sobolev space $\mathcal{H}^{-1}(\Gamma\backslash G)$, together with an elliptic invertible operator $R$  of order $-1$; then an equivalent norm is 
 $$
 \| B \fT_j a_2 \|_{L^2} + \| R \fT_j a_2 \|_{L^2}.
 $$
 Here we note that $\mathsf{s}$ is defined on $T^*(S^* X) = T^*(\Gamma \backslash G)$, takes values in $[-1, 1]$, is equal to $1$ in a conic neighbourhood of the line bundle $E_u^*$ and $-1$ in a conic neighbourhood of $E_s^*$, and is monotone increasing with respect to geodesic flow in direction $t\rightarrow\infty$.
 
 It is immediate that $R \fT_j a_2$ is in $L^2$ with a norm bound of $C \ang{r_j}^{3/2}$, using \eqref{eq:H-1}. To analyze the other term, we observe that, except at the bundle $E_s^*$, one or the other of the operators in \eqref{eqn.X+.and.H.smoothness.2} is elliptic. Using microlocal parametrices for these operators on their respective elliptic sets, we may write
 \begin{equation}
 B = B_1 X_+^2 + B_2 (H+\frac{1}{2}-ir_j)^2 + B_3,
 \label{eq:Edecomp}\end{equation}
 where $B_1$ and $B_2$ have variable order $\mathsf{s}-2$, and $B_3$ has order $-1$ (we may assume without loss of generality that $B_3$ is microsupported in a small conic neighbourhood of $E_s^*$, where $B$ has order $-1$). Then, $B_i \in \Psi^{-1}(S^* X)$ for $i = 1, 2, 3$. Using \eqref{eqn.X+.and.H.smoothness.2} we find that $B \fT_j a_2$ is indeed in $L^2$. (The reason for using the squares of these vector fields in \eqref{eqn.X+.and.H.smoothness.2} was so that we could reduce the orders of $B_1$ and $B_2$ by two relative to $B$, so that they become operators of order $-1$.) Moreover, the right hand side of \eqref{eqn.X+.and.H.smoothness.2} is in $H^{-1}(S^* X)$, with norm bounded by $C \ang{r_j}^{7/2}$, using the same argument as in the previous proof --- the extra powers of $r_j$ arise from up to two derivatives applied to $A_2 \varphi_j$ as well as the term $r_j^2$ in $(H+\frac{1}{2}-ir_j)^2$.  Thus the expressions \eqref{eq:Edecomp} and \eqref{eqn.X+.and.H.smoothness.2} together show that the norm of $B \fT_j a_2$ in $L^2$ is bounded by $\ang{r_j}^{7/2}$. 
 \end{proof}

Next we prove a lemma which shows exponential decay of the correlation of two functions $f_1 \in C^\infty(S^* X)$ and $f_2 \in \mathcal{H}^{-s}(S^* X)$ as $t\rightarrow -\infty$. The argument is relatively standard, but we provide the details for completeness, following the proof of \cite[Corollary 5]{non.zwo.1} rather closely. Note that we use the function space $\mathcal{H}^{-s}$ with $s\geq 1$ since the distributions $E_j$ are in this space.  We will apply Theorem \ref{thm.non.zwo} to the the vector field $-H$ (rather than $H$) which generates the flow $t\mapsto g^{-t}$ for which all the estimates in Theorem \ref{thm.non.zwo} hold with $\mathcal{H}^{-s}$ in place of $\mathcal{H}^s$ and $-H$ in place of $H$.  The reason the estimates still hold under this replacement is simply that the flow $g^{-t}$ reverses the role of the bundles $E_u^*$, $E_s^*$. This will mean that the correlations will decay as $t\rightarrow -\infty$ rather than $t\rightarrow\infty$.

\begin{lemma}\label{lemma.decay.of.cor}
    Let $f_1\in C^{\infty}(S^* X)$ and $f_2 \in \mathcal{H}^{-s}(S^* X)$ be such that one of them has mean zero. Assume that $1 \leq s \leq 2$, and let $0 < \alpha < 1/2$, $N \in \NN, \epsilon>0$ be such that the resolvent is holomorphic for $-\alpha<\Im \lambda<0$ and there is a polynomial bound on the growth of the resolvent of $P = iH$ on the anisotropic space $\mathcal{H}^{-s}$ in the region $\Im \lambda >-\alpha, |\lambda| >\epsilon $ as in Theorem \ref{thm.non.zwo}:
\begin{equation}
\| (P - \lambda)^{-1} \|_{\H^{-s} \to \H^{-s}} \leq C \ang{\lambda}^N, \quad \Im \lambda >-\alpha, |\lambda|>\epsilon.
\end{equation}
Then there is a constant $C_\alpha$ such that for all $t>0$,
\begin{equation}
    \int_{\Gamma\backslash G}f_1(ga_{-t})\overline{f_2(g)} dg \leq C_\alpha e^{-\alpha t} \| f_1\| _{H^{N+4}} \| f_2\| _{\mathcal{H}^{-s}}.
\end{equation}
\end{lemma}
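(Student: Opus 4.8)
The plan is to run the standard contour--deformation argument for exponential mixing of Anosov flows on the anisotropic space $\mathcal{H}^{-s}$, following \cite[Corollary 5]{non.zwo.1} closely: rewrite the correlation as a pairing of the Koopman propagator, represent the propagator by a contour integral of the resolvent of its generator $P$, and push the contour past the leading Pollicott--Ruelle resonance at $\lambda=0$. The mean-zero hypothesis on $f_1$ or $f_2$ is exactly what makes that last move legitimate.

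First I would write $f_1(ga_{-t}) = (U(t)f_1)(g)$, where $U(t)$ is the Koopman operator of the geodesic flow, normalised (this is the application of Theorem~\ref{thm.non.zwo} to $-H$ on $\mathcal{H}^{-s}$ discussed above) so that the decay is as $t\to+\infty$; then $\int_{\Gamma\backslash G} f_1(ga_{-t})\overline{f_2(g)}\,dg = \langle U(t)f_1, f_2\rangle$, interpreted via the pairing of $\mathcal{H}^{-s}$ against its dual (which makes sense since $f_1\in C^\infty$ lies in the dual scale and $f_2\in\mathcal{H}^{-s}$). By Theorem~\ref{thm.non.zwo}, $U(t)$ is a strongly continuous semigroup on $\mathcal{H}^{-s}$, bounded for $t\geq 0$, and for $c$ large and $t>0$ it admits the Hille--Yosida representation
\[
U(t) = \frac{1}{2\pi i}\int_{\Im\lambda = c} e^{-i\lambda t}\,(P-\lambda)^{-1}\,d\lambda ,
\]
the integral converging in the strong topology when applied to smooth data such as $f_1$.

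Next I would deform the contour. By hypothesis $(P-\lambda)^{-1}$ is holomorphic and polynomially bounded, $\|(P-\lambda)^{-1}\|\leq C\langle\lambda\rangle^N$, on $\{\Im\lambda>-\alpha\}$ away from $\lambda=0$, where it has at worst a simple pole whose residue is a multiple of the projection $\Pi$ onto constants; shrinking $\alpha$ if necessary (using Theorem~\ref{thm.non.zwo} and mixing of the geodesic flow) we may assume $\lambda=0$ is the only resonance with $\Im\lambda\geq-\alpha$. Pairing with $f_2$, the residue at $0$ contributes a multiple of $\langle\Pi f_1,f_2\rangle$, and since one of $f_1$, $f_2$ has mean zero this vanishes: $\langle\Pi f_1,f_2\rangle = (\text{mean of }f_1)\langle 1,f_2\rangle = 0$ in the first case, and $\langle\Pi f_1,f_2\rangle=0$ directly in the second. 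Hence $\lambda\mapsto\langle(P-\lambda)^{-1}f_1,f_2\rangle$ is holomorphic throughout $\{\Im\lambda>-\alpha\}$, and we may move the contour from $\{\Im\lambda=c\}$ down to $\{\Im\lambda=-\alpha'\}$, for any $\alpha'\in(0,\alpha)$, without crossing a pole (the vertical pieces at $\Re\lambda=\pm R$ go to $0$ as $R\to\infty$ by the estimates below), obtaining
\[
\langle U(t)f_1,f_2\rangle = \frac{1}{2\pi i}\int_{\Im\lambda=-\alpha'} e^{-i\lambda t}\,\langle(P-\lambda)^{-1}f_1,f_2\rangle\,d\lambda ,\qquad t>0 .
\]

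Finally, on the shifted contour $|e^{-i\lambda t}| = e^{t\Im\lambda} = e^{-\alpha' t}$, so it remains to bound $\int_{\Im\lambda=-\alpha'}|\langle(P-\lambda)^{-1}f_1,f_2\rangle|\,|d\lambda|$ uniformly in $t$. For this I would use the resolvent identity $(P-\lambda)^{-1}f_1 = -\sum_{k=0}^{M-1}\lambda^{-k-1}P^kf_1 + \lambda^{-M}(P-\lambda)^{-1}P^Mf_1$: since $P$ is a first-order operator, $\|P^Mf_1\|_{\mathcal{H}^{-s}}\lesssim\|P^Mf_1\|_{H^{s}}\lesssim\|f_1\|_{H^{M+s}}$ with a constant uniform for $1\leq s\leq 2$, so taking $M=N+2$ the remainder term is $O(|\lambda|^{-2}\|f_1\|_{H^{N+4}}\|f_2\|_{\mathcal{H}^{-s}})$ and is absolutely integrable, while the finitely many slowly decaying terms of the expansion (notably the $\lambda^{-1}$ term) are handled by one integration by parts in $\lambda$, costing a factor $t^{-1}\leq 1$ for $t\geq 1$. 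This yields $|\langle U(t)f_1,f_2\rangle|\leq C_\alpha e^{-\alpha' t}\|f_1\|_{H^{N+4}}\|f_2\|_{\mathcal{H}^{-s}}$ for $t\geq 1$; for $0<t<1$ the same bound is immediate from the boundedness of $U(t)$ on $\mathcal{H}^{-s}$, and relabelling $\alpha'$ as $\alpha$ gives the lemma. The main obstacle is the bookkeeping in this last step: because the anisotropic resolvent is only polynomially controlled, making the shifted contour integral converge forces one to trade the polynomial loss against the extra smoothness of $f_1$ (the source of the Sobolev order $N+4$); the conceptual content --- that the resonance at $\lambda=0$ can be jumped --- is precisely the mean-zero assumption, and the Hille--Yosida representation and contour deformation on anisotropic spaces are by now routine (Faure--Sj\"ostrand, Nonenmacher--Zworski).
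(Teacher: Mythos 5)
Your overall strategy is the same as the paper's --- represent the correlation as a contour integral of the resolvent, use the mean-zero hypothesis to jump the simple pole at $\lambda=0$, push the contour into the resonance-free strip, and trade the polynomial resolvent growth against extra Sobolev smoothness of $f_1$ to make the shifted integral converge (your resolvent-identity expansion with $M=N+2$ plus one integration by parts plays exactly the role of the paper's pre-regularization $\overline{f_1}=(P+i)^{N+2}f_1$, and both land on $H^{N+4}$). The paper differs in that it starts from the honest $L^2$ spectral theorem (after a density reduction to $f_2\in L^2\cap\mathcal{H}^{-s}$) and uses Stone's formula, showing that one of the two boundary values of the resolvent contributes zero, rather than invoking a Hille--Yosida representation of the semigroup directly on the anisotropic space.

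There is, however, a genuine gap in your final estimate: you apply the resolvent to $f_1$ and then pair the result against $f_2$. The hypothesis controls $(P-\lambda)^{-1}$ only as a map $\mathcal{H}^{-s}\to\mathcal{H}^{-s}$ in the strip, so what you can conclude is that $(P-\lambda)^{-1}P^Mf_1\in\mathcal{H}^{-s}$ with norm $O(\ang{\lambda}^N\|f_1\|_{H^{N+4}})$; but the $L^2$ pairing does not extend to $\mathcal{H}^{-s}\times\mathcal{H}^{-s}$, only to $\mathcal{H}^{s}\times\mathcal{H}^{-s}$, so the bound $|\ang{(P-\lambda)^{-1}P^Mf_1,f_2}|\lesssim\ang{\lambda}^N\|f_1\|_{H^{N+4}}\|f_2\|_{\mathcal{H}^{-s}}$ does not follow. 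You cannot repair this by asking for the resolvent to be bounded on $\mathcal{H}^{s}$ instead, because the spaces $\mathcal{H}^{\pm s}$ are adapted to opposite time directions: the meromorphic continuation across the real axis on $\mathcal{H}^{s}$ is available only from the opposite half-plane, so the branch of the resolvent appearing in your contour integral is not bounded there. The fix is to transpose the resolvent onto $f_2$ --- i.e.\ represent the correlation so that $(P_s-\lambda)^{-1}$ acts on $f_2\in\mathcal{H}^{-s}$, where the hypothesized bound lives, and pair against the smooth (hence $\mathcal{H}^{s}$) function obtained by regularizing $f_1$. This is precisely what the paper's arrangement
$\ang{\overline{f_1},(P_s-\lambda)^{-1}f_2}$ with $\overline{f_1}=(P+i)^{N+2}f_1$ accomplishes, and carrying it out essentially converts your argument into the paper's proof. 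Relatedly, your Hille--Yosida representation silently selects one of the two boundary values of the resolvent on the real axis; the paper's Stone's-formula step, which shows the other boundary value contributes nothing, is what justifies that selection and should not be omitted.
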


\begin{remark} Before we prove this lemma, we remark that Faure-Tsujii have stronger results. From their works 
 \cite{fau.tsu.4} and \cite{fau.tsu.5}, it seems from their proof of the band structure of the Pollicott-Ruelle spectrum, that they get a \emph{uniform} estimate of the resolvent in the strip $\Im \lambda \geq -\alpha$ (away from the pole at $\lambda = 0$) on certain anisotropic Sobolev spaces $\widetilde{\mathcal{H}}^s$ that are slightly different from those considered here. If this is so then an abstract result in semigroup theory, the Gearhart-Pr\"{u}ss-Greiner theorem, see page 302 in \cite{engel.nagel}, shows that the semigroup is exponentially decaying, that is, that 
 $$\|e^{tH}f\|_{\widetilde{\mathcal{H}}^s}\leq Ce^{-\alpha t}\|f\|_{\widetilde{\mathcal{H}}^s}$$
 for $t > 0$. This implies an improvement of the result claimed in the lemma, that the correlation is bounded by 
     \[
    \langle e^{tH}f_1, f_2\rangle \leq \|e^{tH}f_1\|_{\widetilde{\mathcal{H}}^s} \|f_2\|_{\widetilde{\mathcal{H}}^{-s}}
    \]

 The proof we present of Lemma \ref{lemma.decay.of.cor} below is a cruder proof sufficient for our purposes where we only require that a spectral gap for the resolvent $(iH-\lambda)^{-1}$ exists with a polynomial growth bound on the resolvent asymptotically in the spectral gap.
\end{remark}

\begin{proof}
%We recall that the space $\H^s$ is defined in \cite{non.zwo.1} as $e^{-s\G} L^2$, where $\G$ is a pseudodifferential operator with a symbol of logarithmic growth at fibre-infinity. By definition of the space $\H^s$, the operator $e^{-s\G} : L^2 \to \H^s$ is unitary, with inverse $e^{s\G} : \H^s \to L^2$. 

Observe that, by the density of $L^2 \cap \H^{-s}$ in $\H^{-s}$, it is sufficient to prove the estimate for $f_2 \in L^2 \cap \H^{-s}$. Assuming this, we write the correlation using $L^2$-spectral theory for the self-adjoint operator $P = iH$  on $L^2(S^* X)$ with domain $D(P) = \{ f \in L^2 \mid Pu \in L^2 \}$. We will also pedantically write $P_s$ for $iH$ on the space $\H^{-s}$ with domain $D(P_s)=\{f\in \mathcal{H}^{-s}\mid Pu\in\mathcal{H}^{-s}\}$.
%and we note that $P_{s}$ is unitarily equivalent to the operator $\Psg$ on $L^2$ defined by  
%\begin{equation}
%\Psg = e^{-s\G} P e^{s\G}. 
%\label{eq:Psg}\end{equation}
%Moreover we have $m(\Psg) = e^{-s\G} m(P) e^{s\G}$ for any Borel function $m$. 

Applying $L^2$-spectral theory for the self-adjoint operator $P$ and the functions $f_1, f_2 \in L^2$, we have 
\begin{equation}
\langle e^{itP}f_1,  f_2\rangle_{L^2(\Gamma\backslash G)} = \int_{\bb{R}} e^{it\lambda} d \langle   f_1,  E(-\infty, \lambda) f_2\rangle 
\label{eq:dc1}\end{equation}
where $E(I)$ is the spectral projector for $P$ onto the set $I \subset \RR$. 
To gain a decay factor in this integral, we exploit the fact that $ f_1$ is smooth. We let $\overline{f_1} = (P + i)^{N+2}  f_1$, which is $C^\infty$, and has mean zero if $f_1$ does, since $\ang{P^j f_1, 1} = \ang{f_1, P^j 1} = 0$.  We can express the correlation as
\begin{equation}
\langle e^{itP} f_1,  f_2\rangle_{L^2(\Gamma\backslash G)} = 
    \int_{\bb{R}} \frac{e^{it\lambda} }{(\lambda + i)^{(N+2)}} d \langle \overline{f_1}, E(-\infty, \lambda)  f_2\rangle . 
\label{eq:dc2}\end{equation}

Next we use Stone's formula to express the spectral measure in terms of the resolvent. We can write \eqref{eq:dc2} as 
\begin{multline}
 \lim_{\epsilon \searrow 0}        \frac{1}{2\pi i} \int_{\bb{R}}\frac{e^{it\lambda}}{(\lambda+i)^{(N+2)}} \Big\langle \overline{f_1}, (P-\lambda-i\epsilon)^{-1}  f_2 \Big\rangle \, d\lambda \\
            -  \lim_{\epsilon \searrow 0}   \frac{1}{2\pi i} \int_{\bb{R}} \frac{e^{it\lambda}}{(\lambda+i)^{(N+2)}} \Big\langle \overline{f_1}, (P-\lambda+i\epsilon)^{-1}  f_2 \Big\rangle \, d\lambda .
\label{eq:dc3}    \end{multline}
The second term is zero, as we see by shifting the contour of integration to $\Im \lambda = c$, $c < 0$ and sending $c \to -\infty$. For this we simply need the estimate $\| (P - \lambda)^{-1} \|_{L^2 \to L^2} \leq |\Im \lambda|^{-1}$. 

To deal with the first term, where we cannot shift the contour to $\Im \lambda < 0$ because of the spectrum of $P$ along the real line, we pass to the operator $P_{s}$, as we may since $f_2 \in \H^{-s}$. We thus have 
 \begin{equation}
  \langle e^{itP} f_1,  f_2\rangle_{L^2(\Gamma\backslash G)} =     
        \frac{1}{2\pi i} \int_{\bb{R}} \frac{e^{it\lambda}}{(\lambda+i)^{(N+2)}} \Big\langle \overline{f_1}, (P_s-\lambda+i\epsilon)^{-1}  f_2 \Big\rangle \, d\lambda .
\label{eq:dc4}    \end{equation}
The resolvent of $P_s$ has a meromorphic continuation to the half-space $\Im \lambda > -1/2$, with a pole at the origin, \cite{dya.fau.gui.1}. The pole is simple with residue being the projection on to constants. Since either $\overline{f_1}$ or $f_2$ has mean zero, the inner product in \eqref{eq:dc4} is holomorphic across zero so the contour can be pushed down to $\Im \lambda = -\alpha$, using the decay factor $(\lambda + i)^{-(N+2)}$ which overcomes the $\ang{\lambda}^N$ growth of the resolvent. Doing so picks up a decay factor $e^{-\alpha t}$ and we obtain the required estimate by estimating 
$$
\Big| \Big\langle \overline{f_1}, (P_s-\lambda - i\alpha)^{-1}  f_2 \Big\rangle \Big| \leq C \ang{\lambda}^{N} \| \overline{f_1} \|_{\H^{s}} \| f_2 \|_{\H^{-s}} \leq C \ang{\lambda}^{N} \| f_1 \|_{H^{N+4}} \| f_2 \|_{\H^{-s}},
$$ 
and integrating in $\lambda$. The last inequality follows as $H^2$ embeds into $\H^{s}$ since $1 \leq s \leq 2$.  
\end{proof}

Now we are in a position to prove the main theorem. 

\begin{proof}[Proof of Theorem~\ref{theorem.main}] A pseudodifferential operator on a compact manifold is trace class provided it has order strictly less than minus the dimension of the manifold. Since we have assumed that $A_1$ has order $\leq -6$ and $A_2$ has order zero, the evolved operator $A_1(t)$ also has order $\leq -6$, so $A_2^* A_1(t)$ has order $\leq -6$ and is trace class. 

We first prove the exponential decay of the trace in the limit $t \to -\infty$. 

By Corollary~\ref{cor.pairing}, the trace of $A_2^*A_1(t)$ is given by a sum
$$
\sum_j \ang{g_t^*a_1( \cdot , r_j), \fT_j a_2} 
$$
of distributions $\fT_j a_2$ depending on $a_2(\cdot, r_j)$ applied to $g_t^*a_1( \cdot , r_j)$. 
We first consider an individual term in this sum. By Lemma~\ref{lem.FS}, the distribution $\fT_j a_2$ is in the anisotropic Sobolev space $\H^{-s}$ for $s \geq 1$, with norm in this space $O(\ang{r_j}^{7/2})$. Provided that $a_1$ is a symbol of order $-6$ or below, the norm of $a_1(\cdot, r_j)$ in any standard Sobolev space $H^m$ is $O(\ang{r_j}^{-6})$, from the symbol estimates, showing that each term in the sum is $O(\ang{r_j}^{-5/2})$. Using Weyl asymptotics we see that $r_j$ is bounded above and below by a multiple of $\sqrt{j}$ as $j \to \infty$, hence the sum is absolutely convergent. Moreover, according to Lemma~\ref{lemma.decay.of.cor}, each term in the sum is bounded by 
$$
C_\alpha e^{-\alpha t} \| a_1(\cdot, r_j)\| _{H^{N+4}} \| \fT_j a_2 \| _{\mathcal{H}^{-s}}
$$
for some $0<\alpha < 1/2$. Removing the exponentially decaying factor in time, we can sum the series, leading 
to the conclusion that the trace decays exponentially in time. 

We next briefly discuss the case $t \to \infty$. Observe first that there is nothing in the statement of the theorem to suggest that one direction of time is favoured over another. Examining the proof, we see that in considering `plane waves' of the form $e^{(1/2 + ir)\ang{z, b}}$, we made a choice to use plane waves that are constant on \emph{forward} horocycles. One can equally well consider plane waves that are constant on \emph{backward} horocycles; these are just the previous plane waves composed with the inversion map, which is the group element 
$$
\begin{pmatrix} 0 & 1 \\ -1 & 0 \end{pmatrix} \in \PSL, 
$$
that is, the element of the subgroup $K$ that rotates by $\pi$ in the fibres. If we do that, then we find that the corresponding Ruelle-Pollicott resonances $\hat E_j$, as in \eqref{eq:Edefn}, are invariant under the generator $X_-$ of unstable horocycle flow rather than the generator $X_+$ of stable horocycle flow as is the case for $E_j$. Correspondingly, the $\hat E_j$ are in the opposite anisotropic space, that is regular at the stable line bundle $E_s^*$ and rough at the unstable bundle $E_u^*$. This leads to exponential decay in as $t \to \infty$. We omit the details.

\end{proof}

\bibliographystyle{plain}
\bibliography{references}
\nocite{*}

\end{document}